\documentclass[reqno,a4paper,12pt]{amsart}
\usepackage{amssymb,amsmath,amscd,amstext,amsthm,amsfonts,mathtools}
\usepackage{mathscinet,mhsetup,pcatcode,ifoption,rkeyval,textcmds}
\usepackage[]{graphicx}
\usepackage{setspace} 
\usepackage{subfig}
\usepackage[mathscr]{eucal}
\usepackage[final]{changes}
\usepackage[ansinew]{inputenc} 

\usepackage{geometry}
\usepackage{amsrefs}
\usepackage{bbm}


\numberwithin{equation}{section}

\newtheorem{theorem}{Theorem}[section]
\newtheorem{proposition}[theorem]{Proposition}

\newtheorem{corollary}[theorem]{Corollary}
\newtheorem{lemma}[theorem]{Lemma}

{\theoremstyle{definition}
{
\newtheorem{remark}[theorem]{Remark}

\newtheorem{defn}[theorem]{Definition}
}}


\newcommand{\cal}{\mathcal}

\newcommand{\A}{{\cal A}}

\newcommand{\OO}{{\cal O}}

\newcommand{\Cc}{{\mathbb{C}}}

\newcommand{\Rr}{{\mathbb{R}}}



\def\diag{\operatorname{diag}}

\def\C{\operatorname{C{}}}



\newcommand{\matbrackets}[2]{\left[\, {#1} \right]_{{#2}} }
\newcommand{\unit}{\mathbbm{1}}

\renewcommand{\d}{\mathrm d}

\newcommand{\Mat}{{\rm Mat}}

\newtheorem{rmk}[theorem]{Remark}
\newtheorem*{theorem*}{Theorem}









\newcommand{\comment}[1]{}

\newcommand{\nund}{\underline{n}}
\newcommand{\intGamma}{\Gamma^{\circ}_{\nund}}
\newcommand{\qtil}{\tilde{q}}

\begin{document}
\title[Hamiltonian Evolutionary Games]{Hamiltonian Evolutionary Games}
\date{May 2014}
\subjclass[2010]{91A22,70G45}
\keywords{Polymatrix Games, Evolutionary Game Theory, Replicator systems, Hamiltonian systems, Poisson structures}

\author[Alishah]{Hassan Najafi Alishah}
\address{CAMGSD\\
Departamento de Matem\'atica \\
Instituto Superior T\'ecnico\\
Av. Rovisco Pais,
1049-001 Lisboa }
\email{halishah@math.ist.utl.pt}

\author[Duarte]{Pedro Duarte}
\address{Departamento de Matem\'atica and CMAF \\
Faculdade de Ci\^encias\\
Universidade de Lisboa\\
Campo Grande, Edificio C6, Piso 2\\
1749-016 Lisboa, Portugal 
}
\email{pduarte@ptmat.fc.ul.pt}

\begin{abstract}
We introduce a class of o.d.e.'s that generalizes to polymatrix games the replicator equations on symmetric and asymmetric games. We also introduce a
new class of Poisson structures on the phase space of these systems, and  characterize the corresponding subclass of Hamiltonian polymatrix replicator systems. This extends known results for symmetric and asymmetric replicator systems. 
\end{abstract}

\maketitle
\tableofcontents


\section{Introduction}
\label{intro}
\subsection*{State of the art}
Evolutionary Game Theory (EGT) originated from the work of
 John Maynard Smith and George R. Price 
 who applied  the theory of strategic games developed by  John von Neumann and  Oskar Morgenstern to evolution problems in Biology. Unlike Game Theory, EGT investigates the dynamical processes of biological populations.

Independently A. Lotkla and V. Volterra  introduced the following  class of o.d.e.'s 
$$ \frac{d x_i}{dt} = x_i\left( \, r_i+\sum_{j=1}^n a_{ij}\,x_j\right) 
\; (1\leq i\leq n)\;,$$
currently known as Lotka-Volterra (LV) systems, and usually taken as models for the time evolution of   ecosystems in $n$ species. 
Although historically this class of systems preceded  EGT they are now considered an integral part of this theory.
The entries $a_{ij}$ represent interactions between different species, while the coefficients $r_i$ stand for the specie's natural growth rates. In his studies~\cite{MR1189803} V. Volterra gave special attention to predator-prey systems and their generalization to food chain systems in $n$ species, which fall in the category of dissipative and conservative LV systems.
Denoting by $A=\matbrackets{a_{ij}}{ij}$ its interaction matrix, a LV system is said to be {\em dissipative}, resp. {\em conservative}, if there exists
a positive diagonal matrix $D$ such that  $A D + D A^t\leq 0$,  resp. $A  D$ is skew symmetric.
The matrix $D$ was interpreted by Volterra as some sort of normalization by the average weights of the different species. If the LV system admits an equilibrium point $q\in\Rr^n$ the following function $H:{\rm int}(\Rr^n_+)\to\Rr$
$$H(x) = \sum_{j=1}^n x_j - q_j\,\log x_j $$
is either a decreasing Lyapunov function, if the system is dissipative, or else a constant of motion,  if the system is conservative. Volterra proved that  the dynamics
of any $n$ species conservative LV system can be embedded in  a Hamiltonian system of dimension $2n$.
More recently, in the 1980's, Redheffer et al. developed further the teory of dissipative LV systems, introducing and studying the class of stably dissipative systems
~\cite{MR800991,MR1027969,MR741270,MR646217,MR644963}.
In~\cite{MR1643678} a re-interpretation was given
for the Hamiltonian character of the dynamics of
any conservative LV system:
there is a Poisson structure on $\Rr^n_+$ which makes
the system Hamiltonian. Another interesting fact from~\cite{MR1643678}, which stresses the importance
of studying Hamiltonian LV systems,
 is that the limit dynamics of any
 stably dissipative LV system is described by a conservative LV system.

Another class of o.d.e.'s, which plays a central role in EGT,  
is the {\em replicator equation} defined  on the simplex $\Delta^{n-1}=\{x\in\Rr_+^n\,\vert\,\sum_{i=1}^nx_i=1\}$ by 
\begin{equation*}
\frac{d x_i}{dt} =x_i\,\left(\sum_{j=1}^n a_{ij}\, x_j -\sum_{k,j=1}^n a_{kj}\, x_k\, x_j \right)\quad\quad (1\leq i\leq n)\;.
\end{equation*}
The  coefficients of this o.d.e. are stored in  an $n\times n $ real matrix $A=\matbrackets{a_{ij}}{ij}$,
that is referred as the {\em pay-off} matrix. 
A game theoretical interpretation for this equation is provided in section~\ref{section:polymatrix}.
Check~\cite{MR1635735} on the history of this equation.
In ~\cite{MR633014} J. Hofbauer introduced a change of coordinates, mapping $\Rr_+^n$ to the simplex $\Delta^n$
minus one face, which conjugates any LV system in $\Rr^n_+$ to a time re-parametrization of a replicator system in $\Delta^n$, and vice-versa. 
Thus when a   LV system is conservative then the corresponding  replicator system is orbit equivalent to a Hamiltonian system.
On the other hand, any replicator system
on $\Delta^{n-1}$ with skew symmetric pay-off matrix
extends to a LV system on $\Rr^n_+$ with $r_i=0$,
and hence can be viewed as a restriction of a Hamiltonian LV system on $\Rr^n_+$. Up to our knowledge these are the known 
subclasses of Hamiltonian replicator systems.

Asymmetric or bimatrix games  lead to another
fundamental class of models in EGT,
the following system of o.d.e.'s
whose coefficients
are displayed in two pay-off matrices,  $A$ of order $n\times m$ and $B$ of order $m\times n$.
\begin{align*} 
\frac{d x_i}{dt} & =
x_i\,\left(\sum_{j=1}^m a_{ij}\, y_j -\sum_{k=1}^n\sum_{j=1}^m a_{kj}\, x_k\, y_j \right) \quad\quad& i=1,..,n\\
\frac{d y_j}{dt} & =
y_j\,\left(\sum_{i=1}^n  b_{ji}\, x_i - \sum_{k=1}^m \sum_{i=1}^n b_{ki}\, y_k\, x_i \right) \quad\quad &j=1,..,m\notag
\end{align*} 
The phase space of this equation is the prism
$\Delta^{n-1}\times\Delta^{m-1}$. A game theoretical interpretation is given in section~\ref{section:polymatrix}.
It was remarked by I. Eshel and E. Akin ~\cite{MR723584} that up to a time re-parametrization these systems always preserve  volume.
For $\lambda$-zero-sum games ($\lambda<0$) and $\lambda$-partnership games ($\lambda>0$), with an interior equilibrium point in the prism $\Delta^{n-1}\times\Delta^{m-1}$,
J. Hofbauer proved in~\cite{MR1393843} that this bimatrix system is
orbit equivalent to a  Hamiltonian system w.r.t. some Poisson structure in the interior of the prism.
Previously, E. Akin and V. Losert~\cite{MR765812}
had noticed the Hamiltonian character of this model in the zero-sum case.

Polymatrix games, like $n$-player games,
 generalize the concept of bimatrix games. 
  The main difference between them is that  interactions between players are bilateral in the former game but not in the latter.
The first reference we could find on the existence of equilibria for these games is
the paper of J. Howson~\cite{MR0392000} who attributes
the concept of polymatrix game to   E. Yanovskaya (1968).
More recently, the structure of Nash equilibria for polymatrix games is studied by L. Quintas in~\cite{MR1024957}.

\bigskip

\subsection*{Main results}
We introduce a class of o.d.e's, referred as {\em polymatrix replicator equation},
that generalizes to polymatrix games the symmetric and asymmetric replicator equations. 
We are not aware of any reference on this equation in the literature. 
The phase space of these systems 
are finite products of simplexes. 
We introduce the concept of {\em conservative}
polymatrix game, which in the case of bimatrix games
extends the $\lambda$-zero-sum games ($\lambda<0$) and the $\lambda$-partnership games ($\lambda>0$).
In Theorem~\ref{prop:main} we introduce a class of Poisson structures on  finite products of simplexes (see~\eqref{poissonpmg}).
We will show that these prisms are stratified Poisson spaces (see section~\ref{poissonreduction}).
Then in Theorem~\ref{conservative:hamiltonian} we show
that any conservative polymatrix game determines a Hamiltonian
 polymatrix replicator.
This work extends and unifies several known facts on Hamiltonian
replicator o.d.e.'s.
In the end of section \ref{section:polymatrix}  we compare our results with known facts mentioned in the state of the art subsection.

\bigskip

The paper is organized as follows.
In section 2 we introduce the needed concepts from
Poisson geometry. In section 3 we state and prove the
main results. In section 4 we discuss a method introduced in~\cite{MR2555841},
called  {\em singular Poisson reduction}, which gives a geometric interpretation of the Poisson structures defined in section 3.
In the last section we workout a couple of examples.

\section{Generalities on Poisson Structures} 
\label{poisson}
In this section we will provide a short introduction to Poisson geometry 
focused on  some dynamical aspects, see any standard textbook on Poisson manifolds and related topics, for example \cite{MR2178041,MR1048350}.

Let $M$ be an $n$-dimensional smooth manifold.
We denote by $C^\infty(M)$ the space of smooth functions on $M$.
A Poisson structure on $M$ is an $\Rr$-bilinear bracket $\{.,.\}:C^\infty(M)\times C^\infty(M)\rightarrow C^\infty(M)$  which satisfies:

\begin{itemize}
\item[i)] Anti-symmetry i.e. $\{f,g\}=-\{g,f\}$ for every $f,g\in C^\infty(M)$.
\item[ii)] Leibniz's rule i.e. $\{fg,h\}=f\{g,h\}+g\{f,h\}$ for every $f,g,h\in C^\infty(M)$.
\item[iii)] Jacobi identity i.e. $\{\{f,g\},h\}+\{\{g,h\},f\}+\{\{h,f\},g\}=0$ 
\end{itemize}
The Leibniz's rule  says  that for any smooth function $H:M\rightarrow\Rr$ the map $\{.,H\}: f\mapsto\{f,H\}$ is a deriviation on $C^\infty(M)$ which in turn yields a vector $X_H$ on $M$ defined by the equality $\{f,H\}=\d f(X_H)$. The vector field $X_H$ is called the {\em Hamiltonian vector field} associated to $H$ on the Poisson manifold $M$.

The singular distribution 
$D(x):=\{X_f(x)\,|\,f\in C^\infty(M)\}$
is called the {\em characteristic distribution} of $M$. 
As a consequence of the Jacobi  identity this distribution integrates to a singular foliation.
Denote by $S_x$ the leaf of this foliation through a point $x$.
The Poisson structure induces a symplectic form on each leaf $S_x$, 
passing through arbitrary point $x\in M$, of this foliation
 defined by $\omega_{S_x}(X_f,X_h)=\{f,h\}$.  The foliation $\mathcal{S}:=\{(S_x,\omega_{S_x})| x\in M\}$ is called \added{the} {\em symplectic foliation }  of the Poisson manifold $M$.
\begin{rmk}\label{rmk:poisson}
The following are well known properties of Poisson structures:
\begin{itemize}
\item[1)]  By $(i)$,
$\d H(X_H)=\{H,H\}=-\{H,H\}=0$. Thus $H$ is an integral of motion for the vector field $X_H$.
\item[2)] The dimension of the linear subspace $D(x)$ is called the {\em  rank} of the Poisson structure at point $x$,
 which is equal to the dimension of the leaf $S_x$. Since this leaf is a symplectic manifold on its own it has even dimension. 
\item[3)] The symplectic foliation $\mathcal{S}:=\{(S_x,\omega_{S_x})| x\in M\}$ completely determines the Poisson structure.
\item[4)] By definition, it is clear that every symplectic leaf $S_x$ is an invariant submanifold for any Hamiltonian vector filed $X_H$. In fact, the 
restriction of $X_H$ to $S_x$  is Hamiltonian with respect to the symplectic structure $\omega_{s_x}$. 
\item[5)] Every symplectic manifold $(N,\omega)$ is a Poisson manifold with Poisson bracket defined by $\{f,g\}_N:=\omega(X_f,X_g)$, where $X_f$ and $X_g$ are \added{the} Hamiltonian vector fields associated to $f$ and $g$ by symplectic structure. 
\item[6)] A function $f$ is called {\em Casimir } if $\{.,f\}=0$. Note that Casimirs are constants of motion for any Hamiltonian vector field. Furthermore, if  $f_1,f_2$ are two Casimirs then $\{f_1,f_2\}$ is also a Casimir due to Jacobi identity.
\end{itemize} 
\end{rmk}
In a local coordinate chart $(U,x_1,..,x_n)$, or equivalently  when $M=\Rr^n$,  a Poisson bracket takes the form
\[\{f,g\}(x)=(\d_x f)^t
\matbrackets{\pi_{ij}(x)}{ij} \d_x g=\sum_{i<j}\pi_{ij}(x)\left( \frac{\partial f}{\partial x_i}\frac{\partial g}{\partial x_j}-\frac{\partial f}{\partial x_j}\frac{\partial g}{\partial x_i}\right),\]
where 
 $\pi(x)=\matbrackets{\pi_{ij}(x)}{ij}= \matbrackets{\{x_i,x_j\}(x)}{ij}$  is 
a skew symmetric matrix valued smooth function, and for every function $f$ we 
write
$$
\d_x f=\begin{pmatrix}\frac{\partial f}{\partial x_1}(x)\\\vdots\\\frac{\partial f}{\partial x_n}(x)\end{pmatrix}\;.
$$
The Jacobi identity translates to:
\begin{equation}\label{condition:poisson}
\sum_{l=1}^{n}  \frac{\partial\pi_{ij}}{\partial x_l}\pi_{lk}+\frac{\partial\pi_{jk}}{\partial x_l}\pi_{li}+\frac{\partial\pi_{ki}}{\partial x_l}\pi_{lj} =0\quad\quad \forall i,j,k\;,
\end{equation}
or equivalently 
\begin{equation}\label{condition:poisson1}
\{\{x_i,x_j\},x_k\}+\{\{x_j,x_k\},x_i\}+\{\{x_j,x_k\},x_j\}=0\quad\quad \forall i,j,k\;.
\end{equation}
Clearly, every skew symmetric
matrix valued function 
$\pi:\Rr^n\to \Mat_{n\times n}(\Rr)$    satisfying condition~\eqref{condition:poisson} defines a Poisson structure on $\Rr^n$. In the next section we shall  introduce our Poisson structures through their associated skew symmetric matrix valued functions,
referred as  bivectors  $\pi:\Rr^n\to \Mat_{n\times n}(\Rr)$. 
The term bivector means that   $\pi(x)$ is as a linear operator $\pi(x):(\Rr^n)^\ast\to \Rr^n$.

\begin{rmk}\label{rmk:poissonlocal}
Regarding the function $\pi$ we have 
\begin{itemize}
\item[1)] For any function $H$ the associated Hamiltonian vector field is defined by 
\[X_H=\pi\,\d H,\]
\item[2)] The  characteristic distribution $D_\pi(x)$ is the one generated by the columns of 
the matrix $\pi(x)$. 
\item[3)] It transforms under a change of variable $\psi:\Rr^n\to\Rr^n$ by
\begin{equation}\label{Poissoncondition1}
(\d_m \psi)\pi (m)(\d_m \psi)^t=\pi(\psi(m)),
\end{equation}
\item[4)] A function $f\in C^\infty(\Rr^n)$ is a Casimir if 
\[X_f= \pi\,\d f=0.\]
\end{itemize}
\end{rmk}

Let $(M,\{,\}_M)$ and $(N,\{.,.\}_N)$ be two Poisson manifolds. 
\begin{defn}\label{def:poissonmap}
A smooth map $\psi:M\to N$ will be called a {\em Poisson map} if and only if 
\begin{equation*}
\{f\circ\psi,h\circ\psi\}_M=\{f,h\}_N\circ\psi\quad \forall f,h\in C^\infty(N).
\end{equation*}
In local coordinate, this condition reads as
\begin{equation}\label{Poissoncondition2}
(\d_m \psi)\pi_M (m)(\d_m \psi)^t=\pi_N(\psi(m)),
\end{equation}
where $\pi_M $ and $\pi_N$ are skew symmetric matrix valued functions associated to Poisson structures of $M$ and $N$, respectively,
 and $\d_m\psi$ is the Jacobian matrix of the map $\psi$ at point $m$.  
\end{defn}


\section{Polymatrix games}
\label{pmg}

\label{section:polymatrix}
In this section we introduce the {\em evolutionary polymatrix games} to which our main result applies.
This class of systems contains both the {\em replicator models} and the {\em evolutionary bimatrix games}.

Consider a population whose individuals interact with each other using one of $n$ possible
pure strategies. The state of the population is described by a probability vector $p=(p_1,\ldots, p_n)$, 
with the usage frequency of each pure strategy. This vector is a point in 
the {\em $n-1$-dimensional simplex}  
$$\Delta^{n-1}=\{\, (x_1,\ldots, x_n)\in\Rr^ n\,:\, 
x_1+\ldots + x_n=1,\, x_i\geq 0\, \}\;.$$
A symmetric game is specified by a $n\times n$  pay-off matrix $A=\matbrackets{a_{ij}}{ij}$,
where the entry $a_{ij}$ represents the pay-off  
of an individual using pure strategy $i$ against another using pure strategy $j$.
Given   $x\in\Delta^ {n-1}$, the value
$(A\,x)_i=\sum_{j=1}^n a_{ij}\, x_j$ represents the average pay-off of strategy $i$
within a population at state $x$. Similarly, the value
$x^ t\, A\, x =\sum_{i,j=1}^n a_{ij}\, x_i\,x_j$ stands for the overall average of a population at state $x$,
while the difference $(A\,x)_i - x^ t\, A\, x$ measures the {\em relative fitness} of strategy $i$ in the population $x$.
The {\em replicator} model is the following o.d.e. on $\Delta^{n-1}$
\begin{equation}\label{replicator}
\frac{d x_i}{dt}  = x_i\,\left( (A\,x)_i - x^ t\, A\, x \right) \quad 1\leq i\leq n
\end{equation}
which says that the logarithmic growth rate of each pure strategy's frequency  equals its relative fitness.
The flow of this  o.d.e. is complete and leaves the simplex $\Delta^ {n-1}$ invariant, as well as every of its faces.

Next we introduce the class of evolutionary asymmetric, or bimatrix games, where two
groups of individuals within a population (e.g. males and females), or two different populations, interact using
different sets of strategies, say $n$ strategies for the first group and $m$ strategies for the second.
The state of this model is a pair of probability vectors in the $(n+m-2)$-dimensional prism $\Gamma_{n,m}=\Delta^{n-1}\times\Delta^{m-1}$. There are no interactions within each group. The game is specified by two pay-off matrices: a $n\times m$ matrix  $A=\matbrackets{a_{ij}}{ij}$,
where $a_{ij}$ is the pay-off for a member of the first group using strategy $i$ against
an individual  of the second group using strategy $j$,
 and a $m\times n$ matrix $B=\matbrackets{b_{ij}}{ij}$ 
with  the pay-offs for  the second group members.
Assuming the first and second group states are $x$ and $y$, respectively,
the value $(A\,y)_i$ is the average pay-off for a first group individual using strategy $i$,
the number $x^ t\, A\, y$ is the overall average
pay-off for the first group members, and the difference $(A\,y)_i-x^ t\, A\, y$ measures the {\em relative fitness} of the first group 
strategy $i$. Similarly,
$(B\,x)_j-y^ t\, B\, x$ measures the {\em relative fitness} of the second group 
strategy $j$ when the group  states are  $x$ and $y$.
The  bimatrix replicator is the following o.d.e. on the prism $\Gamma_{n,m}$
 \begin{align}\label{bimatrix}
\frac{d x_i}{dt}  &= x_i\,\left( (A\,y)_i - x^ t\, A\, y \right) \quad 1\leq i\leq n\\
\frac{d y_j}{dt}  &= y_j\,\left( (B\,x)_j - y^ t\, B\, x \right) \quad 1\leq j\leq m \nonumber
\end{align}
which again says  that the logarithmic growth rate of each  strategy's frequency  equals its relative fitness.
The flow of this  o.d.e. is complete and leaves the  prism $\Gamma_{n,m}$ invariant, as well as every of its faces.

\bigskip

Finally we introduce the class of polymatrix replicators.
Consider $p$ different populations, or else a single population stratified in $p$  groups.
We shall use greek letters like $\alpha$ and $\beta$ to denote these groups.
Assume that for each group $\alpha \in\{1,\ldots, p\}$, there are
 $n_\alpha$ pure strategies for interacting with members of
another group, including its own. Let us call
{\em signature} of the game to the vector $\nund=(n_1,\ldots, n_p)$.
The total number of strategies is therefore $n=n_1+\ldots + n_p$.
The polymatrix game is specified by a single $n\times n$ matrix 
$A=\matbrackets{a_{ij}}{ij}$ with the pay-off $a_{ij}$ for a user of strategy $i$, member of one group,
against a user of strategy $j$, member of another group, possibly the same.
The main difference between polymatrix games and the symmetric game, also specified by a single matrix $A$,
is that in the polymatrix game competition is restricted to members of the same group. 
This means that the relative fitness of each strategy
refers to the overall average pay-off of strategies within the same group.
To be more precise we need to introduce some notation.
We decompose $A$ in blocks, $A=\matbrackets{A^{\alpha,\beta}}{\alpha,\beta}$,
where each block $A^{\alpha,\beta}=\matbrackets{a^{\alpha,\beta}_{ij}}{ij}$
is a $n_\alpha\times n_\beta$ matrix.
Similarly we decompose each vector $x\in\Rr^n$ as
$x=(x^ \alpha)_\alpha$, where $x^{\alpha}\in\Rr^{n_\alpha}$.
We say that a strategy $i$ belongs to a group $\alpha$, and write $i\in \alpha$,
if and only if  $ n_1+\ldots + n_{\alpha-1} <i \leq n_1+\ldots + n_{\alpha}$.
Similarly we write $(i,j)\in\alpha\times\beta$ when $i\in\alpha$ and $j\in\beta$.
With this notation we have
\begin{enumerate}
\item[(a)] $x^{\alpha}_i=x_i$ \, if $i\in\alpha$, and
\item[(b)] $a^{\alpha,\beta}_{ij}=a_{ij}$ \, if $(i,j)\in  \alpha\times\beta$.
\end{enumerate}
Hence the difference $(A\,x)_i - \sum_{\beta=1}^p  (x^{\alpha})^ t A^{\alpha,\beta} x^{\beta} $
represents the {\em relative fitness} of a strategy $i\in\alpha$  within the group $\alpha$.
The polymatrix replicator is the o.d.e.
\begin{equation}\label{ode:pmg}
\frac{ d x^{\alpha}_i}{dt} = x^{\alpha}_i\,\left( (A\,x)_i - \sum_{\beta=1}^p (x^{\alpha})^ t A^{\alpha,\beta} x^{\beta} \right) \quad \forall\; i\in \alpha,\; \alpha\in\{1,\ldots, p\}\;,
\end{equation}
which once more says that the logarithmic growth rate of each pure strategy's frequency  equals its relative fitness.
The flow of this  o.d.e. is complete and leaves the prism $\Gamma_{\nund}=\Delta^{n_1-1}\times \ldots \times \Delta^{n_p-1}$ invariant. The underlying vector field on $\Gamma_{\nund}$ will be denoted by $X_A$.
The pair $G=(\nund,A)$ will be referred as a
{\em polymatrix game}, and the dynamical system determined by $X_A=X_{(\nund,A)}$ as the associated {\em polymatrix replicator} on $\Gamma_{\nund}$.

\bigskip

\begin{rmk}
When $p=1$, $\Gamma_{\nund}=\Delta^{n-1}$ and the evolutionary polymatrix game~\eqref{ode:pmg}
coincides with the replicator o.d.e.~ \eqref{replicator}.
\end{rmk}

\begin{rmk}
When $p=2$ and $A^{1,1}=0$, $A^{2,2}=0$ system~\eqref{ode:pmg}
coincides with the bimatrix replicator~ \eqref{bimatrix}  on  $\Gamma_{\nund}=\Delta^{n_1-1}\times \Delta^{n_2-1}$.
\end{rmk}

\bigskip

The proofs of the following three propositions are easy exercises.

\begin{proposition}[Identity]
The correspondence $A\mapsto X_{(\nund,A)}$ is linear and its kernel is formed by matrices $A\in\Mat_{n\times n}(\Rr)$ such that the block matrix
$A^{\alpha,\beta}$ has equal rows for all $\alpha,\beta=1,\ldots, p$.
Thus, two matrices $A,B\in\Mat_{n\times n}(\Rr)$ determine the same 
vector field $X_{(\nund,A)}=X_{(\nund,B)}$ on $\Gamma_{\nund}$ iff   the block matrix
$A^{\alpha,\beta}-B^{\alpha,\beta}$ has equal rows for all $\alpha,\beta=1,\ldots, p$.
\end{proposition}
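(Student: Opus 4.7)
My plan is to handle the two parts of the proposition — linearity of $A \mapsto X_{(\nund,A)}$ and the characterization of its kernel — separately, the latter containing all the real content. Linearity is immediate from~\eqref{ode:pmg}: both $(Ax)_i$ and $\sum_\beta (x^\alpha)^t A^{\alpha,\beta} x^\beta$ are $\Rr$-linear in the entries of $A$, and multiplication by the scalar $x^\alpha_i$ preserves that linearity. Granted linearity, the equivalence $X_{(\nund,A)} = X_{(\nund,B)} \Longleftrightarrow X_{(\nund,A-B)} = 0$ turns the final "iff" assertion into a statement about the kernel, applied to $C := A - B$.

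For the kernel, I would work on the relative interior of $\Gamma_{\nund}$, where all $x^\alpha_i > 0$. There, dividing through by $x^\alpha_i$, the identity $X_C \equiv 0$ becomes
\[
(Cx)_i \;=\; \sum_{\beta=1}^p (x^\alpha)^t C^{\alpha,\beta} x^\beta \qquad \forall\, i \in \alpha.
\]
A blockwise reshuffling shows that $\sum_\beta (x^\alpha)^t C^{\alpha,\beta} x^\beta = \sum_{j \in \alpha} x^\alpha_j (Cx)_j$, which is a convex combination of $\{(Cx)_j\}_{j \in \alpha}$ with strictly positive weights summing to one. Hence equality with each $(Cx)_i$ is equivalent to the values $(Cx)_j$ being constant across $j \in \alpha$; the converse implication is automatic once that constancy holds. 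Since $(Cx)_i$ is polynomial in $x$, this characterisation extends from the interior to all of $\Gamma_{\nund}$ by continuity.

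What remains — and what I expect to be the main subtle point — is to translate the condition "$(Cx)_i = (Cx)_j$ on $\Gamma_{\nund}$ for all $i,j \in \alpha$" into the stated blockwise structure of $C$. Setting $c_k := c_{ik} - c_{jk}$, this amounts to the vanishing of the linear form $\sum_k c_k x_k$ on the affine subspace defined by $\sum_{k \in \beta} x_k = 1$ for each $\beta$. I would test this identity at the vertices $e_{l_1}\times\cdots\times e_{l_p}$ of $\Gamma_{\nund}$ and vary each $l_\beta$ independently: this shows that $(c_k)_{k \in \beta}$ must be constant on each block $\beta$, with the block-constants summing to zero over $\beta$. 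The "equal rows per block" condition of the statement is the simplest representative of this constraint (corresponding to $c_k \equiv 0$) and is manifestly sufficient; extracting the exact form claimed in the proposition from the necessary condition requires either a normalisation of $A$ within its equivalence class or a slight reformulation of the kernel, which I would make explicit at the end of the proof.
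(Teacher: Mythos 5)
The paper lists this proposition among the ``easy exercises'' and gives no proof, so there is nothing official to compare with; your argument follows the only natural route --- restrict to $\intGamma$, divide by $x^\alpha_i$, note that $\sum_{\beta}(x^\alpha)^tC^{\alpha,\beta}x^\beta=\sum_{j\in\alpha}x_j(Cx)_j$ is a convex combination with strictly positive weights, and reduce membership in the kernel to ``$(Cx)_i$ is constant over $i\in\alpha$ for every $x\in\Gamma_{\nund}$'' --- and every step is sound. More importantly, the discrepancy you flag at the end is real, not an artefact of your method. Your vertex test gives the correct kernel: $C$ lies in it iff for all $i,j\in\alpha$ the difference of rows $i$ and $j$ of $C$ equals $(\mu_1\unit_{n_1}^t,\ldots,\mu_p\unit_{n_p}^t)$ for some scalars with $\sum_\beta\mu_\beta=0$, whereas the proposition only allows $\mu_\beta\equiv 0$. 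For $\nund=(2,2)$ take
\[ C=\begin{pmatrix}1&1&0&0\\0&0&1&1\\0&0&0&0\\0&0&0&0\end{pmatrix}; \]
on $\Gamma_{(2,2)}$ one has $(Cx)_1=x_1+x_2=1=x_3+x_4=(Cx)_2$ and $(Cx)_3=(Cx)_4=0$, so $X_{(\nund,C)}\equiv 0$, yet $C^{1,1}$ does not have equal rows. Hence the ``only if'' half of the proposition (and of the sentence following Definition~\ref{equivalence}) fails whenever $p\geq 2$ and some $n_\alpha\geq 2$; it is correct for $p=1$, where your constraint $\sum_\beta\mu_\beta=0$ forces $\mu_1=0$ and recovers ``equal rows''.

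Two consequences for how you should finish. First, do not try to rescue the literal statement by ``a normalisation of $A$ within its equivalence class'': the kernel of a linear map is what it is, and no choice of representative changes it; the only honest fix is to restate the kernel as the larger space above (or, equivalently, to enlarge the relation $\sim$ of Definition~\ref{equivalence}). As written, your last paragraph leaves this step dangling, and you should replace it with the corrected statement plus the counterexample. Second, you can note that the error is harmless for the rest of the paper: everything downstream (the definition of conservative games, the reduction to $A=A_0D$ in the proof of Theorem~\ref{conservative:hamiltonian}) uses only the sufficient direction, namely that matrices whose blockwise differences have equal rows determine the same vector field, and that direction your proof establishes cleanly.
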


\begin{defn}\label{equivalence} Given a signature $\nund=(n_1,\ldots, n_p)$ and matrices $A,B\in\Mat_{n\times n}(\Rr)$,
we say that the polymatrix games $(\nund,A)$ and $(\nund,B)$  are equivalent,  and  write $(\nund,A) \sim (\nund,B)$, if and only if 
$A^{\alpha,\beta}-B^{\alpha,\beta}$ has equal rows for all $\alpha,\beta=1,\ldots, p$.
\end{defn}

Equivalent matrices determine the same
evolutionary polymatrix game on $\Gamma_{\nund}$.
In other words  $(\nund,A)\sim(\nund,B)$\, iff\, $X_{(\nund,A)}=X_{(\nund,B)}$.

\begin{proposition}[Equilibria]
A point $q\in\Gamma_{\nund}$ is an equilibrium of $X_{(\nund,A)}$ if and only if $(A q)_i=(A q)_j$
for all $\alpha=1,\ldots, p$ and every $i,j\in\alpha$.
\end{proposition}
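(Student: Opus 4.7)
\bigskip

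\noindent\textbf{Proof plan.}  The plan is to work directly from the ODE~\eqref{ode:pmg} and isolate the group-wise average pay-off as the natural object to track. Introduce the shorthand
\[
c_\alpha(x) \;:=\; \sum_{\beta=1}^p (x^\alpha)^t\, A^{\alpha,\beta}\, x^\beta,
\]
so that~\eqref{ode:pmg} reads $\dot{x}^\alpha_i = x^\alpha_i\bigl((Ax)_i - c_\alpha(x)\bigr)$ for every $i\in\alpha$. The key algebraic identity I would highlight first is that $c_\alpha(x)$ is itself the weighted average of the within-group pay-offs against the full state $x$, namely
\[
c_\alpha(x) \;=\; \sum_{i\in\alpha} x^\alpha_i\,(Ax)_i,
\]
which follows by expanding $(Ax)_i = \sum_{\beta}\sum_{j\in\beta} a^{\alpha,\beta}_{ij}\,x^\beta_j$ for $i\in\alpha$ and using $\sum_{i\in\alpha} x^\alpha_i = 1$.

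For the $(\Leftarrow)$ direction I would argue as follows: if $(Aq)_i = (Aq)_j$ for every $i,j\in\alpha$, call this common value $c$; then the identity above gives $c_\alpha(q) = c\sum_{i\in\alpha} q^\alpha_i = c$, and hence $(Aq)_i - c_\alpha(q) = 0$ for each $i\in\alpha$. In particular every component of $X_{(\underline{n},A)}(q)$ vanishes, so $q$ is an equilibrium.

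For the $(\Rightarrow)$ direction, fix a group $\alpha$ and assume $q$ is an equilibrium, i.e.\ $q^\alpha_i\bigl((Aq)_i - c_\alpha(q)\bigr) = 0$ for every $i\in\alpha$. On the support $\{\,i\in\alpha : q^\alpha_i>0\,\}$ all the values $(Aq)_i$ must coincide with $c_\alpha(q)$, hence with each other; since $\sum_{i\in\alpha}q^\alpha_i = 1$, this support is non-empty, so $c_\alpha(q)$ is pinned down by these values. The only delicate point is the role of the vanishing components: strictly speaking the statement as written is best read for strategies actually used by the population (i.e.\ $q^\alpha_i, q^\alpha_j > 0$), or, equivalently, for interior equilibria $q\in\mathrm{int}(\Gamma_{\underline{n}})$. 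This is the main --- indeed the only --- point requiring care; aside from it the argument is a direct substitution and the proposition follows.
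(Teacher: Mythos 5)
The paper offers no proof of this proposition --- it is one of the three results dismissed as ``easy exercises'' just before it is stated --- so there is nothing of the authors' to compare your argument against; yours is the natural direct computation. The identity $c_\alpha(x)=\sum_{i\in\alpha}x^\alpha_i\,(Ax)_i$, which follows from $\sum_{i\in\alpha}x^\alpha_i=1$, is correct, and it gives the $(\Leftarrow)$ direction in full. The caveat you raise in the $(\Rightarrow)$ direction is not a loose end of your proof but a genuine defect of the statement as printed: the equivalence fails on the boundary of $\Gamma_{\nund}$. Indeed every vertex of $\Gamma_{\nund}$ is an equilibrium of $X_{(\nund,A)}$ for any $A$, since at a vertex each component of the field has either the factor $x^\alpha_i$ equal to zero or, for the single supported strategy $i_\alpha$, the factor $(Ax)_{i_\alpha}-c_\alpha(x)$ equal to zero; yet the condition $(Aq)_i=(Aq)_j$ need not hold there. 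A concrete counterexample with $p=1$, $n=2$: for
\[
A=\begin{bmatrix}0&0\\1&0\end{bmatrix},\qquad q=(1,0),
\]
one checks $X_{(\nund,A)}(q)=0$ while $(Aq)_1=0\neq 1=(Aq)_2$. So the proposition is only correct for $q\in\intGamma$, or on the boundary with $i,j$ restricted to the support of $q$ within each group --- exactly the reading you propose, and the one consistent with how the paper uses the condition afterwards (the definition of formal equilibrium, and the gradient formula $X_{(\nund,A)}=\pi_A\,\d H$ on $\intGamma$). Your proof is correct under that reading; the only addition I would make is an explicit counterexample such as the one above, to show that the restriction to the support is necessary and not merely convenient.
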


\begin{defn}
Given a signature $\nund=(n_1,\ldots, n_p)$,
we define the set
$$\mathscr{I}_{\nund}:=\{\,I\subset \{1,\ldots, n\}\,:\,
\# (I\cap \alpha)\geq 1,\; \forall\,\alpha=1,\ldots, p\,\}\;,$$
where
$I\cap \alpha:= I\cap [n_1+\ldots + n_{\alpha-1}+1, n_1+\ldots + n_{\alpha}]$.
A set $I\in \mathscr{I}_{\nund}$ determines the face $\sigma_I:=\{\,x\in\Gamma_{\nund}\,:\,
x_j=0,\, \forall\, j\notin I\,\}$ of $\Gamma_{\nund}$.
\end{defn}

The correspondence between sets in $\mathscr{I}_{\nund}$
and faces of $\Gamma_{\nund}$ is bijective.

\begin{defn} \label{restricted:pmg}
Consider a polymatrix game $G=(\nund,A)$.
Given a set $I\in\mathscr{I}_{\nund}$ the pair $G\vert_I=(\nund^I,A_I)$,
where
$\nund^I=(n_1^I,\ldots, n_p^I)$ with
$n_\alpha^I=\#(I\cap \alpha)$,  and  $A_I=\matbrackets{a_{ij}}{i,j\in I}$,
is called the {\em restriction} of the polymatrix game $G$ to the face $I$.
\end{defn}

The following proposition says that the restriction of a polymatrix replicator to a face
is another polymatrix replicator.

\begin{proposition}[Inheritance]
Consider the system~\eqref{ode:pmg} associated to the polymatrix game $G=(\nund,A)$. 
Given $I\in\mathscr{I}_{\nund}$, the face $\sigma_I$ of $\Gamma_{\nund}$  is invariant under the flow of $X_{(\nund,A)}$ and the restriction of~\eqref{ode:pmg} to $\sigma_I$ is the polymatrix replicator associated to the restricted game $G\vert_I$.
\end{proposition}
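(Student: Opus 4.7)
The plan is to verify both claims by direct inspection of the ODE~\eqref{ode:pmg}, using that the condition $I\in\mathscr{I}_{\nund}$ is precisely what makes $\sigma_I$ a (nonempty) product of faces of the simplexes $\Delta^{n_\alpha-1}$.

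\textbf{Invariance of $\sigma_I$.} I would first observe that the hyperplane $\{x_k=0\}$ is invariant under $X_{(\nund,A)}$ for every $k\in\{1,\ldots,n\}$: indeed, the right-hand side of~\eqref{ode:pmg} has $x^{\alpha}_i$ as a factor, so if $k\in\alpha$ and $x_k=x^{\alpha}_k=0$ then $dx_k/dt=0$, and by uniqueness of solutions the coordinate $x_k$ remains zero along the trajectory. Taking the intersection of these invariant hyperplanes over all $k\notin I$ and intersecting with $\Gamma_{\nund}$ gives exactly $\sigma_I$, which therefore is invariant. (The nonemptiness of $\sigma_I$ as a face follows from the defining property of $\mathscr{I}_{\nund}$, since each simplex factor $\Delta^{n_\alpha-1}$ must retain at least one nonzero coordinate.)

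\textbf{Identification of the restricted vector field.} Let $x\in\sigma_I$, so that $x_j=0$ for every $j\notin I$. For any $i\in\alpha$ we have
\[
(Ax)_i=\sum_{j=1}^{n} a_{ij}\,x_j=\sum_{j\in I} a_{ij}\,x_j,
\]
and for each $\beta$,
\[
(x^{\alpha})^{t}A^{\alpha,\beta}x^{\beta}=\sum_{i\in\alpha}\sum_{j\in\beta}a_{ij}\,x_i\,x_j=\sum_{i\in\alpha\cap I}\sum_{j\in\beta\cap I}a_{ij}\,x_i\,x_j.
\]
Write $\tilde{x}$ for the vector in $\Gamma_{\nund^I}$ obtained by deleting the zero coordinates indexed by $j\notin I$, and let $\tilde{x}^{\alpha}$ denote its $\alpha$-block in the signature $\nund^I$. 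By Definition~\ref{restricted:pmg}, the matrix $A_I=\matbrackets{a_{ij}}{i,j\in I}$ has $\alpha,\beta$-block $(A_I)^{\alpha,\beta}=\matbrackets{a_{ij}}{i\in\alpha\cap I,\,j\in\beta\cap I}$, so the two displays above read exactly as $(A_I\,\tilde{x})_i$ and $(\tilde{x}^{\alpha})^{t}(A_I)^{\alpha,\beta}\tilde{x}^{\beta}$ respectively.

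\textbf{Conclusion.} Substituting these identities into~\eqref{ode:pmg} for $i\in\alpha\cap I$ gives
\[
\frac{d\tilde{x}^{\alpha}_i}{dt}=\tilde{x}^{\alpha}_i\left((A_I\,\tilde{x})_i-\sum_{\beta=1}^{p}(\tilde{x}^{\alpha})^{t}(A_I)^{\alpha,\beta}\tilde{x}^{\beta}\right),
\]
which is precisely the polymatrix replicator equation~\eqref{ode:pmg} associated with $G|_I=(\nund^I,A_I)$. Since no computation here is delicate, there is no real obstacle; the only point that deserves care is matching the bookkeeping of indices between the original signature $\nund$ and the induced signature $\nund^I$, and checking that empty blocks (those $\alpha$ with $\alpha\cap I=\emptyset$) do not arise precisely by the definition of $\mathscr{I}_{\nund}$.
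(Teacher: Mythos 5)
Your proof is correct; the paper itself gives no argument here (it states that the proofs of this and the two neighbouring propositions are ``easy exercises''), and your direct verification --- invariance of the coordinate hyperplanes $\{x_k=0\}$ from the factor $x^{\alpha}_i$ in the right-hand side of~\eqref{ode:pmg}, followed by the observation that on $\sigma_I$ the sums defining $(Ax)_i$ and $(x^{\alpha})^{t}A^{\alpha,\beta}x^{\beta}$ collapse to the corresponding sums for $A_I$ --- is exactly the intended argument. Your closing remark that $I\in\mathscr{I}_{\nund}$ guarantees $\#(I\cap\alpha)\geq 1$, so that $\nund^I$ is a genuine signature with no empty blocks, is the one bookkeeping point worth making explicit, and you made it.
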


\bigskip

We set some notation in order to produce neater formulas. 
In any matrix equality the vectors in $\Rr^n$, or $\Rr^{n_\alpha}$, should be identified with column matrices. We set
$\unit=\unit_{n} =(1,1,..,1)\in\Rr^{n}$ and will omit the subscript $n$
whenever the dimension of this vector is clear from the context.
Similarly, we write $I=I_n$ for the $n\times n$ identity matrix,
and we omit the subscript $n$
whenever its value is clear.
Given $x\in \Rr^n$, we denote by $D_x$ the $n\times n$ diagonal matrix
$D_x= \diag(x_i)_i$
For each $\alpha\in\{1,\ldots, p\}$ we define the $n_\alpha\times n_\alpha$ matrix
$$ T^\alpha_x:= x^\alpha\, \unit^t -I\;,$$
and set $T_x$ to be the $n\times n$ block diagonal matrix
$T_x=\diag(T^\alpha_x)_\alpha$.

\bigskip
 
Given a polymatrix game $G=(\nund,A)$, we 
define the matrix valued  mapping $\pi_A:\Rr^n\to \Mat_{n\times n}(\Rr)$  
\begin{equation}\label{poissonpmg}
\pi_A(x):=(-1)\, T_x\, D_x \, A\, D_x\,T_x^t.
\end{equation}

We have
  $D_x\,A\, D_x=\matbrackets{D_{x^\alpha}\, A^{\alpha,\beta}\, D_{x^\beta}}{\alpha,\beta}$ where
  $D_{x^\alpha}\, A^{\alpha,\beta}\, D_{x^\beta} = \matbrackets{a^{\alpha,\beta}_{ij}x^{\alpha}_ix^{\beta}_j}{i\in\alpha,j\in\beta}$.
Simple calculations show  that $\pi_A(x)=\matbrackets{\pi_{A,i j}(x)}{i,j}$ where 
  for all $(i,j)\in\alpha\times\beta$
\begin{equation}\label{pi:ij}
\pi_{A,i j}(x)=x_i^{\alpha}x_j^{\beta}\left(-a^{\alpha,\beta}_{ij}+(A^{\alpha,\beta}x^{\beta})_i+( (A^{\alpha,\beta})^ t\, x^{\alpha})_j- (x^{\alpha})^t A^{\alpha,\beta}x^{\beta}\right).
\end{equation}
These computations reduce to the simple case $p=1,\,\,n_1=n$ where
\[\pi_A(x)=(-1)\,(x\,\unit^t -I)\, D_x\, A\, D_x\,(\unit\,x^t -I) \;,\]
and
\[\pi_{A,ij}(x)=x_ix_j(-a_{ij}+(Ax)_i+(A^tx)_j-x^tAx)\;. \]

\begin{rmk}
Notice that  $\pi_A(x)$ is a skew symmetric matrix valued map whenever $A$ is a skew symmetric matrix. 
\end{rmk}

\bigskip

\begin{defn}\label{ }
A {\em formal equilibrium} of a polymatrix game  $G=(\nund,A)$ is any vector $q\in\Rr^n$ such that
\begin{enumerate}
 \item[(a)]  $(A\,q)_i=(A \, q)_j$ for all $i,j\in\alpha$, and all $\alpha=1,\ldots, p$,
 \item[(b)]  $\sum_{j\in\alpha} q_j=1$ for all $\alpha=1,\ldots, p$.
\end{enumerate}
\end{defn}

\begin{remark}
A formal equilibrium of  $G=(\nund,A)$ is an equilibrium of the
natural extention of $X_{(\nund,A)}$ to the affine subspace 
spanned by $\Gamma_{\nund}$.
\end{remark}

Next proposition says that the existence of a formal equilibrium is a sufficient condition for the vector field $X_{(\nund,A)}$ of system~\eqref{ode:pmg}
to be a gradient of a simple function $H$ with respect to  $\pi_A$.  We denote by $\intGamma$ the topological interior of $\Gamma_{\nund}$ in the affine subspace of $\Rr^ n$ spanned by $\Gamma_{\nund}$.

\begin{proposition}\label{lemma:matrixform}
Given $A\in\Mat_{n\times n}(\Rr)$, assume there exists a formal equilibrium $q\in\Rr^n$ of $G=(\nund,A)$.
 Then, setting $H(x) =\sum_{i=1}^n q_i\log x_i$,
$$ X_{(\nund,A)}(x)=\pi_A(x)\,\d_x H  \; \text{ for every }\; x\in\intGamma\;.$$
\end{proposition}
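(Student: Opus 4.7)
The plan is to carry out the matrix product $\pi_A(x)\,\d_x H = -\,T_x\,D_x\,A\,D_x\,T_x^t\,\d_x H$ from right to left, invoking the two defining properties of a formal equilibrium at the two places where $T_x^t$ and $T_x$ appear. First I compute $\d_x H = (q_i/x_i)_i$, so the block $\alpha$ of $T_x^t\,\d_x H$, namely $(\unit\,(x^\alpha)^t-I)(q_j/x_j)_{j\in\alpha}$, has $i$-th entry $\sum_{j\in\alpha} q_j - q_i/x_i$. Here property (b) of the formal equilibrium collapses $\sum_{j\in\alpha} q_j$ to $1$, and after multiplying by $D_x$ I obtain the very clean vector $x-q$.

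Multiplying by $A$ next produces $A(x-q)=Ax-Aq$; now property (a) says that $(Aq)_i$ is constant on each group $\alpha$, with some common value $c_\alpha$, so this vector has $i$-th entry $(Ax)_i-c_\alpha$. After a second multiplication by $D_x$ the intermediate vector $w$ satisfies $w_i=x_i\bigl((Ax)_i-c_\alpha\bigr)$ for $i\in\alpha$. The final step is to apply $-T_x$; for $i\in\alpha$ this yields
\[
-x_i\sum_{j\in\alpha} w_j + w_i
\;=\; -x_i\sum_{j\in\alpha} x_j(Ax)_j + x_i c_\alpha\sum_{j\in\alpha} x_j + x_i(Ax)_i - x_i c_\alpha.
\]
Because $x\in\intGamma$ we have $\sum_{j\in\alpha} x_j=1$, so the two $c_\alpha$ contributions cancel and what remains is
\[
x_i\Bigl((Ax)_i - \sum_{j\in\alpha} x_j(Ax)_j\Bigr)
\;=\; x_i\Bigl((Ax)_i - \sum_{\beta=1}^p (x^\alpha)^t A^{\alpha,\beta} x^\beta\Bigr),
\]
which is precisely the $i$-th component of $X_{(\nund,A)}(x)$ as given in~\eqref{ode:pmg}.

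I do not expect any real obstacle: the argument is a careful bookkeeping of four matrix multiplications, with properties (a) and (b) of the formal equilibrium entering once each, paired one-to-one with the two factors of $T_x$. The only subtlety worth flagging is that the cancellation of the $c_\alpha$'s in the last step requires the simplex constraint $\sum_{j\in\alpha} x_j=1$, which is why the identity is asserted on $\intGamma$ (where moreover $H$ is smooth because no coordinate vanishes). One could also present the whole computation in the coordinate form \eqref{pi:ij}, checking that $\sum_j \pi_{A,ij}(x)\,q_j/x_j$ telescopes to the replicator vector field; the matrix-factor approach above is cleaner and makes the role of each of conditions (a) and (b) transparent.
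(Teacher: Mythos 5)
Your computation is correct, and it is essentially the same direct verification the paper gives: both arguments use exactly the same three ingredients --- property (b) to collapse $\sum_{j\in\alpha}q_j$ to $1$, property (a) to make $(Aq)_i$ constant on each group, and the simplex constraint $\sum_{j\in\alpha}x_j=1$ to cancel the resulting terms. The only difference is organizational: the paper sums the coordinate expression \eqref{pi:ij} against $q_j/x_j$ block by block, whereas you multiply the factors of $\pi_A(x)=-T_x D_x A D_x T_x^t$ from right to left (obtaining the tidy intermediate vector $x-q$), which is a slightly cleaner bookkeeping of the same calculation.
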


\begin{proof}
Consider the vector field $Z= \pi_A \,\d H$.
For any $\alpha$, and $i\in\alpha$,
denote by $Z^\alpha_i(x)$ the $i$-th component of $Z(x)$. 
Using that $\sum_{j\in\beta} q^{\beta}_j=1$ we have
\begin{align*}
& Z^ \alpha_{i}(x) =\left(\sum_{\beta =1}^k  \pi^{\alpha,\beta}_A(x) \, \frac{q^ \beta}{x^ \beta} \right)_i
= \sum_{\beta =1}^k \left(\sum_{j\in\beta} \pi^{\alpha,\beta}_{A,ij}(x) \, \frac{q_j^ \beta}{x_j^ \beta} \right)_i \\
&=x_i^{\alpha} \sum_{\beta =1}^k\left[ 
( (A^{\alpha,\beta}x^{\beta})_i- (x^{\alpha})^t A^{\alpha,\beta}x^{\beta} )(\sum_{j\in\beta} q_j^\beta) + (-A^{\alpha,\beta}q^{\beta})_i+ (x^{\alpha})^t A^{\alpha,\beta}q^{\beta}  \right]\\
&=x_i^{\alpha}  \left[ (A x)_i- \sum_{\beta =1}^k(x^{\alpha})^t A^{\alpha,\beta}x^{\beta}
 +  \overbrace{  (- A q)_i + \sum_{i\in\alpha}  x^{\alpha}_i (A q)_i}^ {=0} \right]\\
&=x_i^{\alpha} \left[ (A x)_i- \sum_{\beta =1}^k(x^{\alpha})^t A^{\alpha,\beta}x^{\beta}  \right]
=X_{A,ij}^ \alpha(x)\;,
\end{align*}
where the vanishing term  follows from $q$ being an equilibrium point
and $x^{\alpha}\in\Delta^{n_\alpha-1}$. This completes the proof.
\end{proof}

For every $\alpha=1,\ldots,p$  consider the $(n_\alpha-1)\times n_\alpha$ matrix 
\[E_\alpha:= \left[\begin{array}{rrrrr}
-1 & 0 & \cdots & 0 & 1 \\
0 & -1 & \cdots & 0 & 1 \\
\vdots & \vdots & \ddots &  \vdots & \vdots \\
0 & 0 & \cdots &  -1 & 1 
\end{array}
\right] \]
and set
\begin{align}  
E&:=\mathrm{diag}(E_1,\ldots ,E_p),\nonumber \\
B&:=(-1)EAE^t \;.\label{B:def}
\end{align}
Note that $E\in\Mat_{(n-p)\times n}(\Rr)$ and $B\in\Mat_{(n-p)\times(n-p)}(\Rr)$. 
Next we introduce a mapping  $\phi:\Rr^{n-p}\to \intGamma$.
We write a vector $u\in \Rr^{n-p}=\Rr^ {n_1-1}\times \ldots \times \Rr^ {n_p-1}$
as $u=(u^\alpha)_\alpha$, where
$u^\alpha:=(u^\alpha_1,\ldots,u^\alpha_{{\scriptscriptstyle n_\alpha-1}})$,
and the components of $\phi$ as $\phi(u^ \alpha)_\alpha := (\phi^\alpha(u^ \alpha))_\alpha$, where  each 
$\phi^\alpha:\Rr^{n_\alpha-1}\to (\Delta^{n_\alpha-1})^ {\circ}$ is the map defined by
$$ \phi^\alpha(u^ \alpha):= \left(\frac{e^{u_1^\alpha}}{1+\sum\limits_{i=1}^{{n_\alpha-1}} e^{u_i^\alpha}},\ldots ,\frac{e^{u_{{\scriptscriptstyle n_\alpha-1}}^\alpha}}{1+\sum\limits_{i=1}^{{n_\alpha-1}} e^{u_i^\alpha}},\frac{1}{1+\sum\limits_{i=1}^{{n_\alpha-1}} e^{u_i^\alpha}}\right)	\;. $$

\bigskip

The following is our main result.  Consider a polymatrix game $G=(\nund,A)$. 

\begin{theorem}\label{prop:main}   
 If $A$ is skew symmetric then the mapping $\pi_A$ in \eqref{poissonpmg} defines a stratified Poisson structure on $\Gamma_{\nund}$. 
 Moreover the mapping $\phi:\Rr^ {n-1}\to\intGamma$ is a Poisson diffeomorphism if we endow $\Rr^{n-p}$
with the constant Poisson structure associated to the skew symmetric matrix $B$ defined in~\eqref{B:def}. 
\end{theorem}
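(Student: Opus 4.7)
I would first prove the second (interior) statement, which will automatically yield the Poisson property of $\pi_A$ on the open stratum $\intGamma$; the stratified statement then follows by applying the same argument to each restricted game via the Inheritance Proposition. The map $\phi$ is visibly a smooth diffeomorphism, with explicit inverse
\[ u^{\alpha}_i \;=\; \log x^{\alpha}_i - \log x^{\alpha}_{n_\alpha}\qquad (\alpha=1,\ldots,p,\; i=1,\ldots,n_\alpha-1), \]
so $(u^\alpha_i)$ form a global chart on $\intGamma$. To check that $\phi$ pushes the constant Poisson structure $B$ forward to $\pi_A$, it suffices by Leibniz's rule to verify the identity $\{u^\alpha_i,u^\beta_j\}_{\pi_A}=B^{\alpha\beta}_{ij}$ on the coordinate functions.

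\textbf{Main computation.} From the chain rule, $\d_x u^\alpha_i = (1/x^\alpha_i)\,e^\alpha_i - (1/x^\alpha_{n_\alpha})\,e^\alpha_{n_\alpha}$, so
\[
\{u^\alpha_i,u^\beta_j\}_{\pi_A}(x) \;=\; \sum_{(k,l)}\frac{\varepsilon_{kl}}{x^\alpha_k\,x^\beta_l}\,\pi^{\alpha\beta}_{A,kl}(x),
\]
with $(k,l)$ ranging over $\{i,n_\alpha\}\times\{j,n_\beta\}$ and signs $\varepsilon_{ij}=\varepsilon_{n_\alpha n_\beta}=+1$, $\varepsilon_{i n_\beta}=\varepsilon_{n_\alpha j}=-1$. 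Using the explicit formula \eqref{pi:ij}, the ratio $\pi^{\alpha\beta}_{A,kl}(x)/(x^\alpha_k x^\beta_l)$ splits as $-a^{\alpha\beta}_{kl}$ plus three terms that depend only on $(k,\alpha)$, only on $(l,\beta)$, or on neither. The latter three cancel in the alternating sum $\sum\varepsilon_{kl}(\cdot)$, and what remains is the constant
\[
\{u^\alpha_i,u^\beta_j\}_{\pi_A}(x) \;=\; -a^{\alpha\beta}_{ij} + a^{\alpha\beta}_{i,n_\beta} + a^{\alpha\beta}_{n_\alpha,j} - a^{\alpha\beta}_{n_\alpha,n_\beta},
\]
which a direct expansion of $B=-EAE^t$ shows equals $B^{\alpha\beta}_{ij}$. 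Skew-symmetry of $A$ is used precisely to guarantee that $B$ itself is skew-symmetric (so $B$ defines a bona fide constant Poisson structure on $\Rr^{n-p}$) and that $\pi_A(x)$ is pointwise skew.

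\textbf{Conclusion on $\intGamma$ and passage to the boundary.} Since the brackets of the coordinate functions $u^\alpha_i$ under $\pi_A$ are constants equal to $B^{\alpha\beta}_{ij}$, the bivector $\pi_A$ expressed in the chart $\phi^{-1}$ is the constant matrix $B$. Therefore $\pi_A|_{\intGamma}$ inherits the trivial Jacobi identity of a constant bivector, and $\phi$ is by construction a Poisson diffeomorphism between $(\Rr^{n-p},B)$ and $(\intGamma,\pi_A)$. For the stratified claim, observe that because $\pi^{\alpha\beta}_{A,kl}(x)$ carries the factor $x^\alpha_k x^\beta_l$, all entries of $\pi_A(x)$ with row or column index outside $I\in\mathscr{I}_{\nund}$ vanish identically on $\sigma_I$; the surviving $I\times I$ block is precisely the bivector $\pi_{A_I}$ associated to the restricted game $G|_I=(\nund^I,A_I)$, which is again skew-symmetric. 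Applying the interior result to each $G|_I$ equips the relative interior of every face $\sigma_I$ with a Poisson structure compatible with the one on $\intGamma$, giving the stratified Poisson structure on $\Gamma_{\nund}$.

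\textbf{Expected difficulty.} The only genuinely non-trivial step is the cancellation in the four-term bracket computation; it is a short calculation but requires careful bookkeeping of which summands in \eqref{pi:ij} depend on which indices. Everything else (smoothness and invertibility of $\phi$, skew-symmetry of $B$, reduction of stratified Poissonness to the interior statement on each face) reduces to direct inspection of the formulas, with the Inheritance Proposition doing the work of identifying the restricted bivector with the one built from $A_I$.
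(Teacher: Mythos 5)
Your proof is correct, and it diverges from the paper's in two ways worth recording. For the interior statement the paper verifies the pushforward condition \eqref{Poissoncondition1} as a single matrix identity, $(\d_u\phi)\,E=T_xD_x$, by writing out the Jacobian blocks $J_\alpha(x^\alpha)$ of $\phi$ and multiplying by $E_\alpha$; you instead compute the brackets $\{u^\alpha_i,u^\beta_j\}_{\pi_A}$ of the log-coordinates entrywise from \eqref{pi:ij} and let the alternating four-term sum kill everything except $-a_{ij}+a_{i n_\beta}+a_{n_\alpha j}-a_{n_\alpha n_\beta}=B^{\alpha\beta}_{ij}$. These are the same verification organized differently (your $\varepsilon$-cancellation is exactly the statement that rows of $E_\alpha$ annihilate terms independent of the index, which is what the factorization through $E$ encodes), and both correctly reduce the Jacobi identity on $\intGamma$ to that of the constant bivector $B$. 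The more substantive difference is the stratified claim: the paper's proof of the theorem explicitly defers it (``will be proved later, see Remark~\ref{remark:poissonreduction}'') and in Section~\ref{poissonreduction} obtains it geometrically, realizing $(\Gamma_{\nund},\pi_A)$ as the singular Poisson reduction of a quadratic Poisson structure on $\Cc^{n_1}\setminus\{{\bf 0}\}\times\cdots\times\Cc^{n_p}\setminus\{{\bf 0}\}$ by a proper group action -- an approach that additionally yields that $\pi_A$ satisfies the Jacobi identity on all of $\Rr^n$. You instead argue directly: the factors $x^\alpha_k x^\beta_l$ in \eqref{pi:ij} force the entries of $\pi_A$ indexed outside $I$ to vanish on $\sigma_I$, the surviving block is $\pi_{A_I}$, and the interior result applied to each restricted game $G|_I$ makes every face a Poisson manifold with Poisson inclusion. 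Your route is shorter and self-contained for the statement as quoted; the paper's buys the conceptual origin of $\pi_A$ and its extension to $\Rr^n$, which your argument does not address (nor does the theorem require it).
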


\begin{proof} The map $\phi:\Rr^ {n-1}\to\intGamma$ is a diffeomorphism whose inverse is easily computed.
If $A$ is skew symmetric then so is $B$. Hence this matrix induces a constant Poisson structure
on $\Rr^{n-p}$. We want to prove that $\pi_A$ determines a Poisson structure on $\intGamma$ which makes $\phi$ a Poisson map.
By \eqref{Poissoncondition1} we just need to show that for every $u\in\Rr^{n-p}$ and $x=\phi(u)$, 
\begin{equation}\label{poissonchart}
(\d_u\phi)B(\d_u\phi)^t=(-1)T_xD_xAD_xT^t_x=\pi_A(x)\;.
\end{equation}
The fact that $\pi_A$ also determines a stratified Poisson structure on $\Gamma_{\nund}$, and  on $\Rr^n$, will be proved later.
See Remark~\ref{remark:poissonreduction}.
In order to prove \eqref{poissonchart}, it is enough to see that for every $x=\phi(u)$
\[(\d_u \phi)  E=T_xD_x\;.\]
Writting the components of $\phi^ \alpha$ as
$\phi^\alpha(u^ \alpha)=(\phi_1^\alpha(u^ \alpha),\ldots, \phi_{n_\alpha}^\alpha(u^ \alpha))$ we compute 
for every $i=1,\ldots, n_\alpha$ and $j =1,\ldots, n_\alpha-1$, 
$$ \frac{\partial \phi^\alpha_i}{\partial u^\alpha_j} = \delta_{ij}\,\phi^\alpha_i  
- \phi^\alpha_i  \,\phi^\alpha_j   \;. 
$$
Hence if $x=\phi(u)$, the Jacobian of $\phi$ at the point $u$ is
\[\d_u\phi=\mathrm{diag}(J_\alpha(x^ \alpha))_\alpha,\]
where for every $\alpha=1,\ldots, p$,
$$
J_\alpha(x_1,\ldots, x_{n_\alpha}):=\left[ \begin{array}{rrrrr}
x_1 -x_1^2 & - x_1\,x_2 &  -x_1\,x_3 & \ldots & -x_1\,x_{n_\alpha-1} \\
-x_2\,x_1 & x_2 -x_2^2  &  -x_2\,x_3 & \ldots & -x_2\,x_{n_\alpha-1} \\
-x_3\,x_1 & -x_3\,x_2  &  x_3 -x_3^2 & \ldots & -x_3\,x_{n_\alpha-1} \\
\vdots  & \vdots & \vdots & \ddots & \vdots \\
-x_{n_\alpha-1}\,x_1 & - x_{n_\alpha-1}\,x_2& - x_{n_\alpha-1}\,x_3  & \ldots & x_{n_\alpha-1} -x_{n_\alpha-1}^2 \\
-x_{n_\alpha}\,x_1 & - x_{n_\alpha}\,x_2 & - x_{n_\alpha}\,x_3 & \ldots & -x_{n_\alpha}\,x_{n_\alpha-1} 
\end{array}
\right]
$$
A simple multiplication of matrices, using the relation $x_1 +\ldots +x_{n_\alpha} =1$, shows that $J_\alpha(x^ \alpha) E_\alpha=T_{x^\alpha}D_{x^\alpha}$ for every $\alpha=1,\ldots,p$. Therefore 
\[(\d_u \phi) E=\mathrm{diag}(J_\alpha(x^ \alpha) E_\alpha)_{\alpha}=\mathrm{diag}(T_{x^\alpha}D_{x^\alpha})_{\alpha}=T_xD_x\;,\]
which completes the proof. 
\end{proof}

The next corollary gives a complete description of the symplectic foliation of $(\intGamma,\pi_A)$. 
Two examples will be given in section \ref{examples}.

\begin{corollary}[Symplectic Foliation]\label{symplecticleaves}
The symplectic leaves of  $(\intGamma,\pi_A)$ are the images of the symplectic leaves of $(\Rr^{n-p}, B)$ under the diffeomorphism $\phi$. The symplectic leaf $S_u$ of $(\Rr^{n-p}, B)$  is  the (even dimensional)  affine subspace   through  $u$   parallel to the subspace generated by the columns of $B$. 
\end{corollary}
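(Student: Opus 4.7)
The plan is to derive this as a direct consequence of Theorem~\ref{prop:main}, combined with the elementary description of the symplectic foliation of a constant Poisson structure. First I would invoke the general principle that a Poisson diffeomorphism maps symplectic leaves bijectively onto symplectic leaves: since the characteristic distribution is spanned by Hamiltonian vector fields, and a Poisson map $\phi$ satisfies $\phi_\ast X_{f\circ\phi}=X_f$, the pushforward of the characteristic distribution of the source equals that of the target. Applied to the Poisson diffeomorphism $\phi:(\Rr^{n-p},B)\to(\intGamma,\pi_A)$ supplied by Theorem~\ref{prop:main}, this immediately identifies the symplectic foliation of $\intGamma$ with the $\phi$-image of the symplectic foliation of $(\Rr^{n-p},B)$.

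Next I would describe the symplectic leaves of the constant Poisson structure on $\Rr^{n-p}$ determined by $B$. Using the formula recalled in Remark~\ref{rmk:poissonlocal}(1), the Hamiltonian vector field of any $f$ is $X_f(u)=B\,\d_u f$. Since $B$ does not depend on $u$, the characteristic distribution at every point is the fixed subspace $\operatorname{Im}(B)\subset\Rr^{n-p}$ spanned by the columns of $B$. This constant distribution is trivially integrable, and its maximal integral manifold through $u$ is the affine subspace $u+\operatorname{Im}(B)$. Hence $S_u$ is precisely the affine subspace through $u$ parallel to the column space of $B$, as asserted.

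Finally, the even-dimensionality is automatic: $B$ is skew-symmetric (by the remark following~\eqref{poissonpmg}, since $A$ is), and the rank of a real skew-symmetric matrix is always even, so $\dim S_u=\operatorname{rank}(B)$ is even, consistent with $S_u$ carrying a non-degenerate symplectic form. There is no serious obstacle in the argument since Theorem~\ref{prop:main} does all the heavy lifting; the only point that demands a line of care is the general fact that Poisson diffeomorphisms transport characteristic distributions, which follows at once from the identity $X_{f\circ\phi}=\phi^{\ast}X_f$ characterizing Poisson maps.
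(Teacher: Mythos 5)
Your proposal is correct and follows exactly the route the paper intends: the corollary is stated without proof as an immediate consequence of Theorem~\ref{prop:main}, resting on the standard facts that a Poisson diffeomorphism carries symplectic leaves to symplectic leaves and that the leaves of the constant structure $B$ are the affine translates of the column space of $B$ (even-dimensional since $B$ is skew-symmetric). Nothing is missing.
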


\begin{remark}\label{remark:poissonreduction} Given a face $I\in\mathscr{I}_{\nund}$ consider 
the payoff matrix $A_I$, see Definition~\ref{restricted:pmg}.
Applying Theorem \ref{prop:main} to any face $\sigma_I$ of $\Gamma_{\nund}$
we see that $\sigma_I$ is a Poisson manifold on its own with the Poisson structure $\pi_{A_I}$.
Moreover $(\sigma_I,\pi_{A_I})$ is the restriction of $(\Gamma_{\nund},\pi_A)$ in the sense that
the inclusion map $i:\sigma_I\to \Gamma_{\nund}$ is a Poisson map.
Hence the interiors of the faces of $\Gamma_{\nund}$, regarded as Poisson manifolds, 
give $(\Gamma_{\nund},\pi_A)$ the structure of a Poisson stratified space. In addition it will be shown that  $\pi_A$ defines a Poisson structure on $\Rr^n$.  On section \ref{poissonreduction} we provide a geometric explanation for these facts.  
\end{remark}

Proposition~\eqref{lemma:matrixform} together with Theorem~\eqref{prop:main} yields the following corollary. 

\begin{corollary}\label{cor:main}
If $A$ is skew symmetric and $q\in\Rr^n$ is a formal equilibrium
of $G=(\nund,A)$   then $X_{(\nund,A)}$ is a Hamiltonian vector field, with Hamiltonian $H(x)=\sum_{i=1}^n q_i\log x_i$, w.r.t. the
Poisson structure $\pi_A$ in $\intGamma$.
\end{corollary}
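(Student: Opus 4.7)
The plan is a direct assembly of the two preceding results. Proposition~\ref{lemma:matrixform} already establishes, for any matrix $A$ admitting a formal equilibrium $q$, the pointwise identity
\[ X_{(\nund,A)}(x) \;=\; \pi_A(x)\,\d_x H \]
on $\intGamma$, where $H(x)=\sum_{i=1}^n q_i\log x_i$. This half requires no structural hypothesis on $A$ beyond the existence of the formal equilibrium, and its proof is the matrix computation already carried out in the preceding proposition.

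The second ingredient is Theorem~\ref{prop:main}: under the extra assumption that $A$ is skew symmetric, the bivector $\pi_A$ is promoted from a mere skew symmetric matrix-valued map to an honest Poisson structure on $\intGamma$. Once $\pi_A$ is known to be Poisson, item~(1) of Remark~\ref{rmk:poissonlocal} identifies $\pi_A\,\d H$ as, by definition, the Hamiltonian vector field $X_H$ associated to $H$ with respect to $\pi_A$. Putting the two facts together one reads off $X_{(\nund,A)} = X_H$ on $\intGamma$, which is exactly the content of the corollary.

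The only point worth a line to verify is that $H$ is smooth on $\intGamma$: this is immediate because every $x_i>0$ there, so $\log x_i$ is well defined irrespective of the sign of $q_i$ (recall that the formal-equilibrium definition only imposes the normalisations $\sum_{j\in\alpha} q_j=1$ together with the relative-fitness equalities $(Aq)_i=(Aq)_j$ for $i,j\in\alpha$, and does not require $q\geq 0$). I anticipate no real obstacle here: all the genuine work sits inside Proposition~\ref{lemma:matrixform} (the matrix calculation that exploits $\sum_{j\in\beta} q_j=1$ together with $(Aq)_i$ being constant on each group $\alpha$, which is where the cancellation producing the overbraced vanishing term occurs) and inside Theorem~\ref{prop:main} (which transports the constant Poisson structure on $\Rr^{n-p}$ given by $B$ through the explicit diffeomorphism $\phi$, thereby verifying the Jacobi identity \eqref{condition:poisson} indirectly rather than by a direct bare-hands computation on $\pi_A$). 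The corollary itself is simply their conjunction.
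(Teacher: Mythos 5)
Your proof is correct and matches the paper's own argument exactly: the paper derives the corollary as the immediate conjunction of Proposition~\ref{lemma:matrixform} (giving $X_{(\nund,A)}=\pi_A\,\d H$ from the formal equilibrium) and Theorem~\ref{prop:main} (establishing that $\pi_A$ is a genuine Poisson structure when $A$ is skew symmetric). Your additional remark on the smoothness of $H$ on $\intGamma$ is a harmless, correct supplement.
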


\begin{defn}\label{ }
  A polymatrix game  $G=(\nund,A)$ is said to be {\em conservative}  iff
  \begin{enumerate}
\item[(a)] $G$ has a formal equilibrium, 
\item[(b)] there are matrices $A_0, D\in\Mat_{n\times n}(\Rr)$ such that 
\begin{enumerate}
\item[(i)] $A\sim A_0 D$,
\item[(ii)] $A_0$ is a skew symmetric, 
\item[(iii)] $D=\diag(\lambda_1\,I_{n_1},\ldots, \lambda_p\, I_{n_p})$  with $\lambda_\beta\neq 0$
for every $\beta=1,\ldots, p$.
\end{enumerate}
\end{enumerate}
The matrix $A_0$ will be referred as a {\em skew symmetric model}  for $G$, and $(\lambda_1,\ldots,\lambda_p)\in(\Rr\backslash\{0\})^p$   as a {\em scaling vector}.
\end{defn}

\begin{remark}\label{skew:pmg:model}
Given a skew symmetric matrix $A_0 \in\Mat_{n\times n}(\Rr)$, a signature $\nund$
and a point $\qtil\in\Rr^n$ such that
\begin{enumerate}
 \item[(a)]  $(A_0\,\qtil)_i=(A_0 \, \qtil)_j$ for all $i,j\in\alpha$, and all $\alpha=1,\ldots, p$,
 \item[(b)]  $\sum_{j\in\alpha} \qtil_j\neq 0$ \, for all $\alpha=1,\ldots, p$,
\end{enumerate}
then $G=(\nund,A_0 D)$ is a conservative polymatrix game, where  $D=\diag(\lambda_\alpha\,I_{n_\alpha})_{\alpha}$  with $\lambda_\alpha:=\sum_{j\in\alpha} \qtil_j$, and $q=D^{-1}\qtil$ is
a formal equilibrium of $G$. 
\end{remark}

 It follows from the previous remark that any generic
 skew symmetric matrix can be taken as a model for a 
 conservative polymatrix game. More precisely,
  
\begin{proposition} 
Given a signature $\nund=(n_1,\ldots, n_p)$  with $\sum_{\alpha=1}^p n_\alpha=n$, the set  of skew symmetric matrices
$A_0\in\Mat_{n\times n}(\Rr)$ such that $G=(\nund, A_0\,D)$ is a conservative polymatrix game for some diagonal matrix $D$  
 is an open and dense subset  of the space of skew symmetric matrices.
\end{proposition}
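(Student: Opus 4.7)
The plan is to reduce the statement to a concrete algebraic question via Remark~\ref{skew:pmg:model}. That remark shows that a skew-symmetric $A_0$ belongs to the claimed set if and only if there exists $\qtil\in\Rr^n$ with $(A_0\,\qtil)_i=(A_0\,\qtil)_j$ whenever $i,j$ lie in the same block $\alpha$, and $L_\alpha(\qtil):=\sum_{j\in\alpha}\qtil_j\neq 0$ for every $\alpha$. In terms of the matrix $E$ introduced in \eqref{B:def} together with the auxiliary $p\times n$ block-sum matrix $\tilde{E}$ whose $(\alpha,j)$-entry is $1$ when $j\in\alpha$ and $0$ otherwise, these two conditions read $E\,A_0\,\qtil=0$ and $\tilde{E}\,\qtil\in(\Rr\setminus\{0\})^p$ respectively.

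I would then package both requirements into the square matrix
\[
M(A_0)\;:=\;\begin{pmatrix} E\,A_0 \\ \tilde{E} \end{pmatrix}\in\Mat_{n\times n}(\Rr).
\]
Whenever $M(A_0)$ is invertible, the vector $\qtil:=M(A_0)^{-1}(0,\unit_p)^t$ witnesses both conditions, so $A_0$ lies in the desired set. Since $\det M(A_0)$ is a polynomial in the entries of $A_0$, the locus $\{A_0\text{ skew}:\det M(A_0)\neq 0\}$ is the complement of a real algebraic hypersurface in the space of skew-symmetric matrices, and is therefore Euclidean-open; moreover, it is automatically dense as soon as it is non-empty, because the vanishing set of a nontrivial polynomial on a Euclidean space has empty interior.

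The remaining and main step is to exhibit a single skew $A_0^*$ with $M(A_0^*)$ invertible. My approach would be to start from a ``decoupled'' model $A_0^{\#}$ having vanishing diagonal blocks and off-diagonal blocks of the form $c_{\alpha\beta}\,\unit_{n_\alpha}\,\unit_{n_\beta}^t$ with $c_{\alpha\beta}=-c_{\beta\alpha}$, for which $\qtil=\unit$ trivially satisfies the two conditions (though $M(A_0^{\#})$ itself fails to be invertible since $E\,A_0^{\#}=0$), and then to perturb by a small generic skew matrix so as simultaneously to push the rank of $E\,A_0$ up to $n-p$ and to render the kernel of $E\,A_0$ transversal to $\ker\tilde{E}$. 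The principal obstacle is verifying the existence of such a perturbation, which amounts to showing that $A_0\mapsto\det M(A_0)$ is not the zero polynomial on the space of skew-symmetric matrices. I would handle this by an inductive construction on the signature $\nund$ exploiting block structure, or, in small explicit cases, by a direct computation; particular care is required in cases (such as $p=1$ with $n$ even) where the skew-symmetry of $A_0^{-1}$ forces extra degeneracies in $\tilde{E}\,A_0^{-1}\,\tilde{E}^t$ and the block construction must be refined accordingly.
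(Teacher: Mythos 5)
First, a point of reference: the paper itself gives no proof of this proposition; it is asserted as an immediate consequence of Remark~\ref{skew:pmg:model}, so your attempt is being compared against an argument the authors left implicit. Your reduction is the correct starting point: $A_0$ belongs to the set in question precisely when some $\qtil$ satisfies $E A_0\qtil=0$ with all block sums $\sum_{j\in\alpha}\qtil_j$ nonzero. The gap is in the next step. The sufficient condition $\det M(A_0)\neq 0$ is strictly stronger than what is needed, and the ``remaining main step'' you defer --- that $\det M$ is not the zero polynomial on skew-symmetric matrices --- is in fact \emph{false} for many signatures where the proposition nevertheless holds. For $\nund=(2,1,1)$ one computes $\det M(A_0)=\pm(a_{12}+a_{21})\equiv 0$; yet there $EA_0$ is a single row $w$ with $w_1=w_2$, the kernel $\ker(EA_0)=w^\perp$ is $3$-dimensional, and for generic $A_0$ it is not contained in any of the three hyperplanes $\{\qtil_1+\qtil_2=0\}$, $\{\qtil_3=0\}$, $\{\qtil_4=0\}$, so a good $\qtil$ exists. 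The object to control is the subspace $\ker(EA_0)$, of dimension at least $p$, which contains an admissible $\qtil$ iff it is not contained in any single hyperplane $H_\alpha=\{\sum_{j\in\alpha}\qtil_j=0\}$ (a subspace lies in a finite union of hyperplanes iff it lies in one of them); demanding in addition that $\ker(EA_0)\cap\ker\tilde E=0$ discards exactly the cases where $\dim\ker(EA_0)>p$ or where skew symmetry degenerates $\tilde E A_0^{-1}\tilde E^t$ without harming the actual condition.

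Second, the case you flag --- $p=1$ with $n$ even --- is not a technicality that a refined construction can absorb: it is a counterexample to the proposition as literally stated. There $EA_0\qtil=0$ forces $A_0\qtil\in\mathrm{span}(\unit)$, so for invertible $A_0$ every solution is a multiple of $A_0^{-1}\unit$ and satisfies $\unit^t\qtil=\unit^t A_0^{-1}\unit=0$; hence $((n),\lambda A_0)$ has no formal equilibrium for any $\lambda\neq0$, and since for $p=1$ the $\sim$-class of a skew matrix contains no other skew matrix, the set in the proposition is contained in the nowhere dense set of singular skew matrices (for $\nund=(2)$ it is $\{0\}$). So no completion of your argument can succeed for such signatures unless ``some diagonal $D$'' is read so loosely (e.g.\ allowing $D=0$) that the statement becomes vacuous. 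A smaller final point: exhibiting an open dense subset of the desired set proves density but not openness of the set itself; openness needs a separate observation, e.g.\ that at the points produced the rank of $EA_0$ is maximal, hence locally constant, so that $\ker(EA_0)$ varies continuously.
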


Next theorem basically says that the replicator system~\eqref{ode:pmg} is Hamiltonian for every conservative  polymatrix game.

\begin{theorem}\label{conservative:hamiltonian}
Consider a conservative polymatrix game $G=(\nund,A)$ with
formal equilibrium $q$, skew symmetric model $A_0$ and scaling co-vector $(\lambda_1,\ldots, \lambda_p)$.
Then $X_{(\nund,A)}$ is Hamiltonian in the interior of the Poisson stratified space  $(\Gamma_{\nund},\pi_{A_0})$, with Hamiltonian function
\begin{equation}\label{constant:of:motion}
H(x) =\sum_{\beta=1}^p\lambda_\beta\sum_{j\in\beta}q^{\beta}_j\log x_j^{\beta}\;.
\end{equation}
\end{theorem}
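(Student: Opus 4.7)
The plan is to reduce the statement to Proposition~\ref{lemma:matrixform} combined with a short algebraic manipulation that exchanges the diagonal matrix $D$ between the Poisson tensor and the Hamiltonian. Since $A \sim A_0 D$, the Identity Proposition gives $X_{(\nund,A)} = X_{(\nund,A_0 D)}$. Equivalence of polymatrix games also preserves formal equilibria, because $(A - A_0 D)q$ has components that depend only on the group index $\alpha$ of $i$ (each block of $A - A_0 D$ has equal rows). Hence $q$ is a formal equilibrium of $(\nund, A_0 D)$ as well, and Proposition~\ref{lemma:matrixform} applied to $(\nund, A_0 D)$ yields
\[
X_{(\nund, A_0 D)}(x) = \pi_{A_0 D}(x)\, \d_x H', \qquad H'(x) := \sum_{i=1}^n q_i \log x_i.
\]

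The key step I would then carry out is to verify that $\pi_{A_0 D}(x)\, \d_x H' = \pi_{A_0}(x)\, \d_x H$, where $H$ is the Hamiltonian in~\eqref{constant:of:motion}. Starting from the definition $\pi_{A_0 D}(x) = -T_x D_x A_0 D\, D_x T_x^t$, the crucial observation is that $D = \diag(\lambda_\alpha I_{n_\alpha})_\alpha$ is scalar on each block $\alpha$. Consequently $D$ commutes with the block-diagonal matrices $D_x$ (diagonal) and $T_x^t = \diag((T_x^\alpha)^t)_\alpha$, so we may push $D$ through to the right end:
\[
\pi_{A_0 D}(x)\, \d_x H' = -T_x D_x A_0 D_x T_x^t \bigl( D\, \d_x H'\bigr).
\]
A direct computation shows $(D\, \d_x H')_j = \lambda_\beta\, q_j / x_j$ for $j \in \beta$, which is exactly $\partial H / \partial x_j$ by the definition~\eqref{constant:of:motion} of $H$. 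Therefore $D\, \d_x H' = \d_x H$, and the right-hand side equals $-T_x D_x A_0 D_x T_x^t\, \d_x H = \pi_{A_0}(x)\, \d_x H$.

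Putting the two steps together gives $X_{(\nund, A)}(x) = \pi_{A_0}(x)\, \d_x H$. Finally, since $A_0$ is skew symmetric, Theorem~\ref{prop:main} guarantees that $\pi_{A_0}$ defines a stratified Poisson structure on $\Gamma_{\nund}$ and in particular a genuine Poisson structure on $\intGamma$; hence $X_{(\nund, A)}$ is the Hamiltonian vector field of $H$ on $(\intGamma,\pi_{A_0})$.

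There is no real obstacle here once the block-scalar structure of $D$ is recognized: that structural fact is what allows $D$ to slide through $T_x^t$ and $D_x$ and be absorbed into $H'$, producing $H$. Conceptually, $\tilde q := D q$ plays the role of a ``rescaled'' equilibrium weighting that compensates the replacement of $\pi_{A_0 D}$ by $\pi_{A_0}$, and the argument is a clean matrix computation rather than a delicate estimate.
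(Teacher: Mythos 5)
Your proposal is correct and follows essentially the same route as the paper: both reduce to Proposition~\ref{lemma:matrixform} for $A=A_0D$ and then exploit the block-scalar form of $D$ to trade $\pi_{A_0D}\,\d H'$ for $\pi_{A_0}\,\d H$ (the paper does this blockwise by moving the scalar $\lambda_\beta$ onto $q^\beta/x^\beta$, which is the same computation as your commuting $D$ through $D_xT_x^t$). Your explicit check that formal equilibria are preserved under the equivalence $A\sim A_0D$ is a small point the paper leaves implicit.
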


\begin{proof}
In view of definition~\ref{equivalence} we can assume that  $A=A_0 D$. 
For every $\alpha,\beta$,
$$T^{\alpha}_x\,D_{x^\alpha}\, A_0^{\alpha,\beta}\,D_{x^\beta}\,(T^{\beta}_x)^t\lambda_\beta\,\frac{q^\beta}{x^\beta} =
T^{\alpha}_x\,D_{x^\alpha}\,  A ^{\alpha,\beta}\, D_{x^\beta} \,(T^{\beta})^t\,\frac{q^\beta}{x^\beta} \;,$$
where $q^\beta/x^\beta$ stands for the componentwise division of the vectors.
Adding up in $\beta$, and using Proposition~\ref{lemma:matrixform}, we get
$$ \pi_{A_0}(x) \,\d_x H = \pi_A(x)\,\d_x\left(\sum_{j=1}^n q_j\,\log x_j\right) = X_{(\nund,A)}(x)\;. $$
\end{proof}

In the next paragraphs we compare our results 
with previously known facts.
Given  a skew symmetric matrix $A\in\Mat_{n\times n}(\Rr)$,
since  $x^t\,A\,x=0$ for all $x\in\Rr^n$, the replicator equation~\eqref{replicator} reduces
to a Lotka-Volterra equation with growth rates $r_i=0$
\begin{equation}\label{red:repl:LV}
\frac{d x_i}{dt}=x_i\,(A\,x)_i\quad 1\leq i\leq n \;.
\end{equation} 
For any $q\in\Rr^n$ such that $A\,q=0$ the function
$H(x)=\sum_{j=1}^n x_j- q_j\,\log x_j$ is a constant of motion for \eqref{red:repl:LV}. A Poisson structure on $\Rr^n$ 
defined by the bivector $\hat{\pi}_A(x)=D_x\, A\, D_x$ was  introduced in~\cite{MR1643678}.
System~ \eqref{red:repl:LV} is Hamiltonian in the interior of $\Rr^n_+$ w.r.t. $\hat{\pi}_A$ having $H$ as Hamiltonian function.
Like $\hat{\pi}_A$ the Poisson structure $\pi_A$ introduced here 
can be extended to $\Rr^n$, but unlike $\pi_A$ the structure
$\hat{\pi}_A$ does not restrict to
a Poisson structure on the simplex $\Delta^{n-1}$.
Using the Poisson structure $\pi_A$ we can  now say,
if there exists $q\in\Rr^n$ such that $A\,q=0$ and $\sum_{j=1}^n q_j\neq 0$,  that the system
\eqref{red:repl:LV} is Hamiltonian in the interior of  the simplex $\Delta^{n-1}$. Furthermore, here we study the replicator equation itself and  not a topologically equivalent LV system.

Consider now a bimatrix game with signature $(n_1,n_2)$
and matrix
\[A=\begin{pmatrix}0&A_{12}\\A_{21}&0\end{pmatrix}\;.\]
If $\lambda>0$, resp. $\lambda<0$,
 the polymatrix game $((n_1,n_2),A)$  is conservative with scaling vector $(1,\lambda)$
if and only if  it has a formal equilibrium and 
  the bimatrix game $(A_{12},A_{21})$ is $\lambda$-zero-sum game, resp. $\lambda$-partnership game, (see definitions in section 11.2 of~ \cite{MR1635735}).
 Theorem~\ref{conservative:hamiltonian} generalizes the main result (section 5) in~\cite{MR1393843}, which says that the
 evolutionary system~\eqref{bimatrix} associated to a
 $\lambda$-zero-sum  or $\lambda$-partnership game is orbit equivalent to a bipartite Lotka-Volterra system that is Hamiltonian w.r.t. some Poisson structure. This leads to the same constant
 of motion~\eqref{constant:of:motion}, but from the work~\cite{MR1393843} we only  derive the existence of a Poisson structure in the  interior of the prism $\Delta^{n_1-1}\times\Delta^{n_2-1}$
 for which some time re-parametrization of system~\eqref{bimatrix} is Hamiltonian w.r.t. that Poisson structure.
On the other hand here we provide a Poisson structure on the full prism that makes the original system Hamiltonian in the interior of the prism.

We finish this section with an extension of the class
of Hamiltonian  polymatrix replicators.
Given $p$ smooth functions $\lambda_\alpha:\Gamma_{\nund}\to \Rr\backslash\{0\}$, $\alpha=1,\ldots, p$, consider the matrix valued smooth function $D:\Gamma_{\nund}\to \Mat_{n\times n}(\Rr)$,
$D(x)=\diag(\lambda_\alpha(x) I_{n_\alpha})_\alpha$, and the system of o.d.e.'s
\begin{equation}\label{pmg:gen}
 \frac{d x^\alpha_i}{dt} = \  x^{\alpha}_i\,\left( (A D(x) \,x)_i - \sum_{\beta=1}^p \lambda_\beta(x)\,(x^{\alpha})^ t A^{\alpha,\beta}  x^{\beta} \right) \quad \forall\; i\in \alpha,\; 1\leq \alpha \leq p  
\end{equation}
associated with the vector field $Y(x)=X_{(\nund,A D(x))}(x)$  on $\Gamma_{\nund}$.

\begin{proposition}
Let $A\in\Mat_{n\times n}(\Rr)$ be a skew symmetric matrix, $q\in\Rr^n$  a formal equilibrium of $G=(\nund,A)$, and consider the $1$-form
 $$ \xi(x) = \sum_{\alpha=1}^p\sum_{j\in\alpha}
  \lambda_{\alpha}(x) \,q_j^\alpha\,\frac{d x_j^\alpha}{x_j^\alpha}\;. $$
 Then system ~ \eqref{pmg:gen} is the gradient of the $1$-form $\xi$ w.r.t. the Poisson structure $\pi_A$ in the interior of $\Gamma_{\nund}$, i.e.,
\[Y(x) =\pi_A(x)\,\xi(x)\;. \]
System~ \eqref{pmg:gen} is Hamiltonian if the  form $\xi$ is exact, i.e., there exists a smooth function $H$ such that $\xi=\d H$. But even if $\xi$ is not exact, the dynamics of $Y$ leaves invariant the symplectic foliation of $(\intGamma,\pi_{A})$. 
\end{proposition}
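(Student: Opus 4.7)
The plan is to derive the core identity $Y(x)=\pi_A(x)\,\xi(x)$ via a commutation argument that reduces the state-dependent payoff $AD(x)$ to the constant-matrix case of Proposition~\ref{lemma:matrixform}, and then to obtain the Hamiltonian and foliation-invariance claims as immediate corollaries. First I would observe that $\xi$ factors as $\xi(x)=D(x)\,\d_x H_0$ with $H_0(x):=\sum_{i=1}^n q_i\,\log x_i$, since the $i$-th component of $D(x)\,\d_x H_0$ equals $\lambda_\alpha(x)\,q_i/x_i$ for $i\in\alpha$, which matches the $i$-th coefficient of $\xi$.

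Next, I would exploit that $D(x)=\diag(\lambda_\alpha(x)\,I_{n_\alpha})_\alpha$ shares the block-diagonal structure of $T_x$ with scalar blocks, so $D(x)$ commutes with both $T_x^t$ and the diagonal matrix $D_x$. From the definition $\pi_A(x)=-T_x\,D_x\,A\,D_x\,T_x^t$ this commutation yields
\[ \pi_A(x)\,D(x) = -T_x\,D_x\,A\,D(x)\,D_x\,T_x^t = \pi_{AD(x)}(x)\;, \]
so $\pi_A(x)\,\xi(x)=\pi_{AD(x)}(x)\,\d_x H_0$. Applying Proposition~\ref{lemma:matrixform} pointwise to the game $(\nund,AD(x))$, with $D(x)$ frozen at the point $x$ and $q$ playing the role of its formal equilibrium, would give $\pi_{AD(x)}(x)\,\d_x H_0 = X_{(\nund,AD(x))}(x) = Y(x)$, completing the core identity.

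The main obstacle lies in justifying this final invocation of Proposition~\ref{lemma:matrixform}: the hypothesis only guarantees that $q$ is a formal equilibrium of $(\nund,A)$, and one must transfer this condition to the rescaled payoff $AD(x)$, i.e.\ verify that $(AD(x)\,q)_i$ is block-constant in $i\in\alpha$. This will require careful use of the scalar-block structure of $D(x)$ together with the simplex constraints $\sum_{j\in\alpha}q_j=1$ and $\sum_{j\in\alpha}x_j=1$; as a fallback, one can perform a direct coordinate-wise computation of $\pi_A(x)\,\xi(x)$ mirroring the proof of Proposition~\ref{lemma:matrixform}, cancelling the residual $q$-dependent terms against the formal-equilibrium relation for $A$ and the skew-symmetry of $A$.

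Once the identity $Y=\pi_A\,\xi$ is established, the remaining assertions follow at once. If $\xi=\d H$ for some smooth $H$, then $Y=\pi_A\,\d H=X_H$ is the Hamiltonian vector field of $H$ with respect to $\pi_A$, as claimed. When $\xi$ is not globally exact, the identity still forces $Y(x)$ to lie pointwise in the image of $\pi_A(x)$, which by Remark~\ref{rmk:poisson} coincides with the tangent space to the symplectic leaf through $x$; hence the flow of $Y$ preserves the symplectic foliation of $(\intGamma,\pi_A)$.
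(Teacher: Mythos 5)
Your overall route is the same one the paper intends: the paper's proof is a one--line deferral to Theorem~\ref{conservative:hamiltonian}, and your factorization $\xi(x)=D(x)\,\d_x H_0$ with $H_0(x)=\sum_i q_i\log x_i$, followed by the commutation $\pi_A(x)D(x)=-T_xD_xA\,D_xT_x^t D(x)=-T_xD_xA\,D(x)D_xT_x^t=\pi_{AD(x)}(x)$ (valid because $D(x)$ is block--scalar and hence commutes with both $D_x$ and the block--diagonal $T_x^t$), is exactly the mechanism used there. That part of your argument is correct.

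The problem is the step you yourself flag as ``the main obstacle'': it is not a technicality that careful bookkeeping will dissolve, but a genuine gap that neither of your proposed remedies closes. Invoking Proposition~\ref{lemma:matrixform} for the frozen game $(\nund,AD(x))$ requires $(AD(x)\,q)_i=\sum_\beta\lambda_\beta(x)(A^{\alpha,\beta}q^\beta)_i$ to be constant in $i\in\alpha$, whereas the hypothesis only gives that the \emph{unweighted} sum $\sum_\beta(A^{\alpha,\beta}q^\beta)_i$ is constant in $i\in\alpha$. Your fallback direct computation runs into the same wall: mimicking the proof of Proposition~\ref{lemma:matrixform} with the weights $\lambda_\beta(x)$ inserted, the $i$-th component of $\pi_A(x)\xi(x)$ comes out as $Y_i(x)$ plus the residual
\begin{equation*}
x_i\sum_{\beta=1}^p\lambda_\beta(x)\Bigl[(x^{\alpha})^tA^{\alpha,\beta}q^{\beta}-(A^{\alpha,\beta}q^{\beta})_i\Bigr],
\end{equation*}
and skew--symmetry of $A$ plays no role in making this vanish; since the $\lambda_\beta$ are arbitrary nonvanishing functions, the residual vanishes for all admissible $D(x)$ only when each vector $A^{\alpha,\beta}q^{\beta}$ is separately constant over $i\in\alpha$ --- a strictly stronger condition than $q$ being a formal equilibrium of $(\nund,A)$. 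This stronger condition does hold in the cases the paper actually uses (when $p=1$, or in the bimatrix system~\eqref{aggr:bimatrix} where each block--row of $A$ has a single nonzero block), which is presumably why the issue is invisible in Theorem~\ref{conservative:hamiltonian}, where moreover the equilibrium hypothesis is placed on the \emph{scaled} matrix $A_0D$ rather than on $A_0$. So to complete a proof along your lines you must either add the blockwise condition as a hypothesis or restrict to such cases; as written, the transfer of the formal--equilibrium property to $AD(x)$ is asserted but not (and cannot in general be) established. Your final paragraph --- that $Y=\pi_A\,\d H$ is Hamiltonian when $\xi=\d H$, and that $Y(x)\in\operatorname{Im}\pi_A(x)$ forces invariance of the symplectic foliation --- is fine once the core identity is granted.
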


\begin{proof}
The proof is similar to that of Theorem~\ref{conservative:hamiltonian}.
\end{proof}

The previous model~\eqref{pmg:gen} contains the following class of  o.d.e.'s
introduced by J. Maynard Smith as an extension of the asymmetric replicator equation~\eqref{bimatrix}. 
\begin{align}\label{aggr:bimatrix}
\frac{d x_i}{dt}  &= x_i\,\left( (A_{12}\,y)_i - x^ t\, A_{12}\, y \right)\, m_1(x,y) \quad 1\leq i\leq n\\
\frac{d y_j}{dt}  &= y_j\,\left( (A_{21}\,x)_j - y^ t\, A_{21}\, x \right)\,m_2(x,y)   \quad 1\leq j\leq m \nonumber
\end{align}
See appendix J of~\cite{smith82}, 
and system (9.1) in~\cite{MR1393843}. 
Taking
 $$ A=\left[ \begin{array}{cc}0 & A_{12}\\ A_{21} & 0 \end{array}\right]\quad\text{ and  }\quad
D(x)=\left[ \begin{array}{cc}m_2(x,y)\,I_{m} & 0\\ 0 & m_1(x,y)\,I_{n} \end{array}\right]  $$
system ~\eqref{aggr:bimatrix} reduces to ~\eqref{pmg:gen}.
Since system~\eqref{aggr:bimatrix} has a dissipative character
for certain choices of the functions $m_1(x,y)$ and $m_2(x,y)$
it would be interesting to investigate analogous properties
of system~\eqref{pmg:gen}.



\section{Singular Poisson Reduction}
\label{poissonreduction}

This section is devoted to elaborate Remark \eqref{remark:poissonreduction}. We will review the singular Poisson reduction introduced in \cite {MR2555841} and use it to show that the phase space of an evolutionary game with a skew symmetric payoff matrix is a Poisson stratified space.

A smooth {\em action} of a Lie group $G$ on the manifold $M$ is a smooth map
\begin{align*}
\A:G\times M\to M
\end{align*}
such that for every $g,h\in G$ and $m\in M$ one has $\A(gh,m)=\A(g,\A(h,m))$ and $\A(e,m)=m$, where $e$ is the identity element of $G$.  For every $g\in G$,  $\A_g$ denotes the diffeomorpism defined by $m\mapsto \A(g,m).$

The action is said to be {\em proper } if the map
\begin{align*}
G\times M\to M\times M\quad\quad (g,m)\mapsto(m, \A(g,m)),
\end{align*}
is proper. Recall that a map is called proper if the preimage of any compact subset is compact. The {\em stabilizer} of a point $m\in M$ is 
\[G_m:=\{g\in G|\,\, \A(g,m)=m\}\]
The action is called {\em free} if $G_m=\{e\}$ for every $m\in M$. The set $$\OO_m:=\{\A(g,m)|\,\ g\in G\}$$ is called the {\em orbit} passing through the point $m\in M$ and the set
\[M/G:=\{\OO_m|\,\,m\in M\}\]
is called {\em the orbit space} of the action. The map $\pi_G:M\to M/G$ sending every point $m$ to its orbit $\OO_m$ is called the projection map of the action. The orbit space $M/G$ can be given a topology by $U\subset M/G$ being open if and only if $\pi_G^{-1}(U)$ is open in $M$. With this topology, it is a Hausdurff  topological space if the action is proper. We will only consider proper actions so the orbit space shall always be a Hausdorff topological space all over this section.  

For any subgroup $H$ of $G$ the {\em $H$-isotropy type submanifold} of $M$ is
\[M_H:=\{m\in M| G_m=H\},\]
and the {\em $(H)$-orbit type submanifold} is 
\[M_{(H)}:=\{m\in M| G_m\in (H)\},\]
where $(H)$ denotes the conjugacy class of $H$ in $G$.  Notice that the action of $G$ restricts to $M_{(H)}$ and $M_{(H)}/G$ can be defined and is called {\em the $(H)$-orbit type reduced  space}.   

We recall the definition of a smooth stratified space, see~\cite{MR1738431,MR1869601}.
\begin{defn}\label{defn:smoothstara}
Let $X$ be a paracompact Hausdorff  topological space. A {\em smooth stratification} of $X$ is a locally finite partition of $X$ into locally closed connected smooth submanifolds $S_i\,\,\,(i\in I)$, called the strata  of the stratification, such that for a pair of submanifolds $S_i,S_j$ if $S_i\cap\bar{S}_j\neq0$ then $S_i\subset\bar{S}_j$.  When this happens $S_i$ is called incident to $S_j$ or a boundary piece of $S_j$. 
\end{defn}
The following proposition is a well-known result in the theory of Lie group actions, see e.g.~\cite{MR1738431,MR1869601} for the proof.
\begin{proposition}
If the action of the Lie group $G$ on $M$ is proper then the orbit space $M/G$ is a smooth stratified space. Furthermore, if the action is free then $M/G$ can be equipped with a smooth manifold structure such that  the projection map $\pi_G$ becomes a submersion.
\end{proposition}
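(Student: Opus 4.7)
The plan is to invoke the Slice Theorem (Palais' Tube Theorem) for proper Lie group actions and then read off both the stratification and the free-action statement from its consequences. The backbone of the argument is local: one shows that around every orbit the action is modelled on a standard bundle, and then verifies that the stratification conditions of Definition~\ref{defn:smoothstara} hold locally, hence globally by paracompactness and local finiteness.

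\textbf{Step 1 (Slice Theorem).} For each $m\in M$, the stabilizer $G_m$ is compact because the action is proper (the preimage of $\{(m,m)\}$ under $(g,x)\mapsto(x,\A(g,x))$ is $G_m\times\{m\}$, which must be compact). Using an auxiliary $G_m$-invariant metric on $T_m M$ (averaged over the compact group $G_m$), choose a $G_m$-invariant complement $V\subset T_m M$ to $T_m\OO_m$, and use the exponential map to produce a $G_m$-invariant slice $S\subset M$ through $m$ such that the saturation $G\cdot S$ is $G$-equivariantly diffeomorphic to the associated bundle $G\times_{G_m} S$.

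\textbf{Step 2 (Orbit-type strata are smooth manifolds).} Fix a conjugacy class $(H)$ and consider the stratum $M_{(H)}/G$. Using the local model $G\times_{G_m} S$ from Step 1, points of orbit type $(H)$ near $m$ correspond to points of $S$ whose $G_m$-stabilizer is conjugate to $H$ inside $G_m$. A standard computation (Bochner's linearization plus the observation that for a compact group action on a vector space the fixed-point sets of subgroups are linear subspaces) shows that $S_{(H\cap G_m)}$ is a smooth submanifold of $S$, and hence $M_{(H)}$ is a smooth $G$-invariant submanifold of $M$. Since the restricted action of $G$ on $M_{(H)}$ has all stabilizers conjugate, the quotient $M_{(H)}/G$ inherits a natural smooth manifold structure from the local models $G\times_{G_m} S_{(H\cap G_m)}/G \cong S_{(H\cap G_m)}/N_{G_m}(H\cap G_m)$, where the denominator acts freely modulo its connected identity component on the relevant pieces.

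\textbf{Step 3 (Verifying the stratification axioms).} Local finiteness of the partition $\{M_{(H)}/G\}_{(H)}$ follows from the slice theorem: in the tubular neighborhood $G\times_{G_m} S$, only finitely many subgroups of the compact group $G_m$ arise as stabilizers of points of a small ball in $S$ (a standard compactness argument). The frontier condition — if $S_i\cap \overline{S}_j\neq\emptyset$ then $S_i\subset\overline{S}_j$ — is likewise local and reduces via the slice theorem to the analogous statement for linear actions of compact groups on vector spaces, which follows from the principal orbit type theorem. Paracompactness of $M/G$ is inherited from $M$ since $\pi_G$ is open and continuous. This gives the stratified structure on $M/G$.

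\textbf{Step 4 (The free case).} If the action is free, then $G_m=\{e\}$ for all $m$, so there is a single orbit type and only one stratum. The slice $S$ in Step 1 now maps diffeomorphically onto its $G$-orbit, and $\pi_G(S)$ provides a smooth chart on $M/G$; compatibility of two such charts follows from the $G$-invariance of the transition, and the projection $\pi_G$ is locally expressed as $(g,s)\mapsto s$ on $G\times S$, a submersion. The Hausdorff property of $M/G$ was already noted in the paragraph preceding the proposition.

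The main obstacle is Step 1: producing the slice and promoting it to an equivariant tubular neighborhood requires properness in an essential way (compactness of $G_m$ for the averaging, and properness to ensure the saturation map $G\times_{G_m} S\to M$ is an embedding onto a neighborhood of the orbit rather than merely an immersion). Once the Slice Theorem is available, Steps 2–4 are essentially bookkeeping applications of it, so I would cite Palais' theorem and the references \cite{MR1738431,MR1869601} cited in the text for the technical details rather than rederive them.
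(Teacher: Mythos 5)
Your outline is correct and follows the standard slice-theorem argument; the paper itself gives no proof, citing \cite{MR1738431,MR1869601}, and those references establish the result in essentially the way you describe (Palais' tube theorem, orbit-type decomposition, frontier condition via the principal orbit type theorem, single stratum in the free case). Nothing further is needed beyond the citations you already propose.
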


A function $f\in C^\infty(M)$ is called {\em $G$-invariant} if and only if 
\[f\circ\A_g(m)=f(m)\quad \forall g\in G , m\in M.\]
Any $G$-invariant function reduces to a function on $M/G$ so we define:
\begin{defn}
The algebra of smooth functions on the orbit space $M/G$ is 
\[C^\infty(M/G):=\{f\in C^0(M/G)|\,\,  f\circ\pi_G\in C^\infty(M)\},\]
where $C^0(M/G)$ denotes the algebra of continuous  functions on the topological space $M/G$. 
\end{defn}

\begin{defn}
An action of $G$ on the Poisson manifold $(M,\{.,.\})$ is called {\em Poisson} if $\A_g$
is a Poisson diffeomorphism for every $g\in G$.
\end{defn}

Notice that if the action is Poisson i.e.
\[\{f\circ \A_g, h\circ\A_g\}=\{f,g\}\circ\A_g\quad\forall f,h,g\] 
then Poisson bracket of any two $G$-invariant function is again $G$-invariant.  Using this fact, a bracket can be defined on the algebra of smooth functions on $M/G$ by 
\begin{equation}\label{quotientPoisson}
 \{f,h\}_{M/G}(\OO_m)=\{f\circ\pi_G,h\circ\pi_G\}(m^\prime),
 \end{equation}
where $m^\prime$ is an arbitrary element of $\OO_m$. In the case of free proper Poisson action this bracket is a Poisson bracket on the manifold $M/G$. Clearly $\pi_G$ is a Poisson map between $(M,\{.,.\})$ and $(M/G,\{.,.\}_{M/G})$.  

It is clear that $(C^\infty(M/G), \{.,.\}_{M/G})$ is a Poisson algebra. Recall that  a Poisson  algebra is an algebra equipped with a skew symmetric bracket satisfying Leibniz's rule and Jacobi identity.  We state Theorem 2.12 of \cite{MR2555841} which will be used to show that the phase space of an evolutionary polymatrix game with a skew symmetric payoff matrix is a Poisson stratified space.
\begin{theorem}[Singular Poisson Redution]\label{thm:singular}
Let $\A:G\times M\to M$ be a proper Poisson action. Then the connected components of the orbit type reduced space $M_{(H)}/G$ form a Poisson stratification $\{S_i\}_{i\in I}$ of $(M/G,\{.,.\}_{M/G})$ i.e. a smooth stratification 
such that
\begin{itemize}
\item Each Strata $S_i\,\,\,i\in I$, is a Poisson manifold.
\item The inclusions $i:S_i\to M/G$ are Poisson maps.
\end{itemize} 
\end{theorem}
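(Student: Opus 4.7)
The plan is to establish the three assertions of the theorem separately: (i) the partition of $M/G$ by connected components of the orbit-type reduced spaces $M_{(H)}/G$ is a smooth stratification in the sense of Definition~\ref{defn:smoothstara}; (ii) each stratum carries an intrinsic Poisson bracket; (iii) the inclusion of each stratum into $(M/G,\{\cdot,\cdot\}_{M/G})$ is a Poisson map. The whole argument rests on the slice theorem for proper Lie group actions, which will play the role that the implicit function theorem plays in the regular (free) case.

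For (i), I would invoke the standard theory of proper Lie group actions. Each orbit-type submanifold $M_{(H)}$ is a locally closed $G$-invariant smooth submanifold of $M$, the family $\{M_{(H)}\}$ is locally finite, and the frontier condition of Definition~\ref{defn:smoothstara} holds. Since the action of $G$ on $M_{(H)}$ has constant stabilizer conjugacy class $(H)$, the quotient $M_{(H)}/G$ is naturally a smooth manifold and $\pi_G$ restricts to a submersion onto it. Transporting this data through $\pi_G$, the connected components of the $M_{(H)}/G$ form a locally finite partition of $M/G$ by locally closed smooth submanifolds satisfying the frontier condition, which is exactly what is meant by a smooth stratification.

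For (ii), I would construct the Poisson bracket on a fixed stratum $S\subset M_{(H)}/G$ as follows. Given $f,g\in C^\infty(S)$, first pull them back via $\pi_G|_{M_{(H)}}$ to $G$-invariant smooth functions on $M_{(H)}$; then, via the slice theorem, produce a $G$-invariant open neighborhood $U\subset M$ of $M_{(H)}$ together with $G$-invariant smooth extensions $\tilde f,\tilde g\in C^\infty(U)^G$. By the Poisson-action hypothesis, $\{\tilde f,\tilde g\}_M$ is $G$-invariant on $U$, so its restriction to $M_{(H)}\cap U$ descends through $\pi_G$ to a function on $S$ which we take as $\{f,g\}_S$. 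Skew-symmetry and the Jacobi identity for $\{\cdot,\cdot\}_S$ are then inherited directly from $\{\cdot,\cdot\}_M$; the Leibniz rule follows as soon as well-definedness is established.

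The hard part is precisely that well-definedness: the bracket must not depend on the choice of $G$-invariant extension. Locally around a point $m\in M_{(H)}$, the slice theorem models a neighborhood as the associated bundle $G\times_{G_m}V$, where $V$ is a $G_m$-representation, and $M_{(H)}$ corresponds to $G\times_{G_m}V^{G_m}$. Two $G$-invariant extensions of the same datum then differ by a function coming from a $G_m$-invariant smooth function on $V$ that vanishes on $V^{G_m}$. One needs to show that $\{\tilde f-\tilde f',\tilde g\}_M$ vanishes on $M_{(H)}$ whenever $\tilde f,\tilde f'$ agree on $M_{(H)}$ and $\tilde g$ is $G$-invariant; this uses Schwarz's theorem on smooth invariants together with the observation that the Hamiltonian vector field of a $G$-invariant function is tangent to every orbit-type submanifold, so differentiation along such a vector field preserves the ideal of $G$-invariant smooth functions vanishing on $M_{(H)}$. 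Once this is in hand, assertion (iii) is a corollary of the construction itself: given $F_1,F_2\in C^\infty(M/G)$, the pullback $F_1\circ\pi_G$ is already a $G$-invariant extension of $(F_1|_S)\circ\pi_G|_{M_{(H)}}$, so the construction above produces $\{F_1,F_2\}_{M/G}\circ\iota=\{F_1|_S,F_2|_S\}_S$, which is precisely the Poisson property of the inclusion $\iota:S\hookrightarrow M/G$.
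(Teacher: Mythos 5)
The paper offers no proof of this statement: it is quoted verbatim as Theorem~2.12 of \cite{MR2555841}, so there is nothing internal to compare your argument against. Your sketch is nonetheless a faithful outline of the standard proof of that result: the smooth stratification comes from the slice theorem for proper actions, the bracket on a stratum is defined via $G$-invariant extensions, and well-definedness reduces to the fact that the Hamiltonian vector field of a $G$-invariant function is tangent to each orbit-type submanifold (this holds because such a vector field is $G$-equivariant for a Poisson action, so its flow commutes with the action and preserves isotropy types). The two points you leave implicit --- the existence of $G$-invariant smooth extensions off the locally closed submanifold $M_{(H)}$ (obtained by averaging an arbitrary extension over the proper action with a cutoff) and the equivariance argument just mentioned --- are exactly what must be supplied to turn the sketch into a complete proof, but the skeleton is correct and matches the argument in the cited reference.
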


The following is, basically, the example which is presented in  \cite{MR2555841}*{Section 2.5}. Let
$$M=\Cc^{n_1}\backslash\{{\bf 0}\}\times\ldots\times\Cc^{n_p}\backslash\{{\bf 0}\},$$
 where $n_1,\ldots,n_p$ are integers such that $n=n_1+\ldots+n_p$. We will consider  $M$ as a real $2n$ dimensional manifold with coordinates $(\xi,\eta)\in\Rr^{2n}$ where $z_i=\xi_i+i\eta_i$ for $i=1,\ldots,n$.  Equip $M$ with the quadratic Poisson structure defined by:
 \[\pi_M(\d w_i,\d w_j)=\{w_i,w_j\}_M={1\over 4}a_{ij}w_i w_j,\]
 where $i,j=1,\ldots,n$ for $w=\xi,\eta$ and $A$ is a skew symmetric matrix.  In the language of bivectors:
 \[\pi_M(\xi,\eta)={1\over 4}\begin{pmatrix}D_\xi AD_\xi&D_\xi AD_\eta\\D_\eta AD_\xi&D_\eta AD_\eta\end{pmatrix}.\]
 
 We shall denote by $\Cc^\ast$ the group of non zero complex numbers.  The group $(\Cc^\ast)^n$ acts on $M$ by component-wise multiplication. Denote this action by
 \begin{equation}
 \A(\lambda,z)=(\lambda_1z_1,\ldots,\lambda_nz_n)
  \end{equation}
 \begin{lemma}\label{lemma:Poissonmap}
 The action of $(\Cc^\ast)^n$ on $M$ is Poisson i.e.  for any $\lambda\in(\Cc^\ast)^n$ the linear map $\A_{\lambda}:M\to M$ defined by
 \[z\mapsto(\lambda_1z_1,\ldots,\lambda_nz_n)\]
  is a Poisson map. 
 \end{lemma}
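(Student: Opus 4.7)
The plan is to rewrite the Poisson bracket in complex coordinates, where it becomes manifestly log-linear, so that multiplication by a nonzero constant in each complex coordinate obviously preserves it.

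First I would compute the brackets of $z_i = \xi_i + i\eta_i$ and $\bar z_i = \xi_i - i\eta_i$ using $\Rr$-bilinearity of $\{\cdot,\cdot\}_M$ and the given values of the bracket on the real coordinates, read off from the bivector: $\{u_i, v_j\}_M = \tfrac14 a_{ij}\, u_i v_j$ for any $u, v \in \{\xi, \eta\}$ and any $i, j$. A direct expansion yields
\begin{equation*}
\{z_i, z_j\}_M = \tfrac14 a_{ij}\,z_i z_j,\qquad \{\bar z_i, \bar z_j\}_M = \tfrac14 a_{ij}\,\bar z_i \bar z_j,\qquad \{z_i, \bar z_j\}_M = \tfrac14 a_{ij}\,z_i \bar z_j.
\end{equation*}
Thus on the generating family $W = \{z_k, \bar z_k : 1 \le k \le n\}$ of $C^\infty(M)$ every bracket has the same log-linear form with coefficients $a_{ij}$.

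Second, to verify that $\A_\lambda$ is Poisson it is enough to check the identity $\{f\circ \A_\lambda,\, g\circ \A_\lambda\}_M = \{f,g\}_M \circ \A_\lambda$ for $f, g$ ranging over the generators $W$, since both sides are biderivations in $(f,g)$. Because the action is component-wise multiplication, the pullbacks act as $\A_\lambda^* z_i = \lambda_i z_i$ and $\A_\lambda^* \bar z_i = \bar\lambda_i \bar z_i$. Hence for any $u_i, v_j \in W$ with corresponding scaling constants $\mu, \nu \in \Cc^\ast$,
\begin{equation*}
\{\A_\lambda^* u_i,\, \A_\lambda^* v_j\}_M = \mu\nu\,\{u_i, v_j\}_M = \tfrac14 a_{ij}\,(\mu u_i)(\nu v_j) = \A_\lambda^*\bigl(\tfrac14 a_{ij}\,u_i v_j\bigr) = \A_\lambda^*\{u_i, v_j\}_M,
\end{equation*}
using that $\mu,\nu$ are constants on $M$ and that $\A_\lambda^*$ is a ring homomorphism on $C^\infty(M)$.

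There is no serious obstacle here; the essential point is that a log-linear Poisson bracket is invariant under multiplicative rescaling of its generating coordinates. The only mild subtlety is passing from the real description in the statement to the complex generators $\{z_k, \bar z_k\}$, but this is a purely algebraic repackaging of the brackets already prescribed and is forced by the explicit bivector $\pi_M(\xi,\eta)$ displayed above. The same calculation can be carried out directly in the real coordinates $\xi_i,\eta_i$ by writing $\lambda_i = \alpha_i + i\beta_i$ and expanding, but the complex formulation avoids the bookkeeping.
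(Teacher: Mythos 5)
Your proof is correct, and it reaches the same verification as the paper by a slightly different, cleaner route. The paper works entirely in the real coordinates $(\xi,\eta)$: it writes down the Jacobian $\d_{(\xi,\eta)}\A_\lambda=\begin{pmatrix}D_{\xi_0}&-D_{\eta_0}\\D_{\eta_0}&D_{\xi_0}\end{pmatrix}$ and checks the bivector transformation rule $(\d\A_\lambda)\,\pi_M\,(\d\A_\lambda)^t=\pi_M\circ\A_\lambda$ by an explicit block-matrix multiplication, using the identities $\xi_0\xi-\eta_0\eta$ and $\eta_0\xi+\xi_0\eta$ for the real and imaginary parts of the product. You instead complexify: passing to the generators $z_k,\bar z_k$ diagonalizes the quadratic bracket into the uniform form $\{u_i,v_j\}_M=\tfrac14 a_{ij}u_iv_j$, after which invariance under the componentwise scaling is a one-line consequence of homogeneity. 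The two arguments verify the same identity (checking a biderivation on coordinate generators is exactly checking the bivector transformation law), but yours buys transparency -- it makes visible \emph{why} the action is Poisson, namely that the bracket is log-linear in multiplicative coordinates -- at the cost of the small bookkeeping step of extending the real bracket $\Cc$-bilinearly to complex-valued functions and noting that $\{z_k,\bar z_k\}$ is a legitimate coordinate system for this purpose. Your computation of $\{z_i,z_j\}_M$, $\{z_i,\bar z_j\}_M$, $\{\bar z_i,\bar z_j\}_M$ checks out against the displayed bivector, so there is no gap.
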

 \begin{proof}
 In real coordinates, we denote $\lambda=(\xi_0,\eta_0)$ and $z=(\xi,\eta)$. By this notation
 \[L_\lambda(z)=(\xi_0\xi-\eta_0\eta,\eta_0\xi+\xi_0\eta),\]
 where $\xi_0\xi$ stands for component-wise multiplication of these vectors. Similarly for $\xi_0\eta, \eta_0\xi$ and $\eta_0\xi$. 
 We need to check condition~\eqref{Poissoncondition1}. Clearly,
 \[\d_{(\xi,\eta)}\A_{\lambda}=\begin{pmatrix}D_{\xi_0}&-D_{\eta_0}\\D_{\eta_0}&D_{\xi_0}\end{pmatrix}\]
 Simple calculation shows
 \begin{align*}
 &{1\over4}(\d_{(\xi,\eta)}\A_{\lambda})\begin{pmatrix}D_\xi AD_\xi&D_\xi AD_\eta\\D_\eta AD_\xi&D_\eta AD_\eta\end{pmatrix}(\d_{(\xi,\eta)}\A_{\lambda})^t=\\
 &{1\over4}\begin{pmatrix}D_{(\xi_0\xi-\eta_0\eta)} AD_{(\xi_0\xi-\eta_0\eta)}&D_{(\xi_0\xi-\eta_0\eta)} AD_{(\eta_0\xi+\xi_0\eta)}\\D_{(\eta_0\xi+\xi_0\eta)} AD_{(\xi_0\xi-\eta_0\eta)}&D_{(\eta_0\xi+\xi_0\eta)} AD_{(\eta_0\xi+\xi_0\eta)}\end{pmatrix}=\pi_M(\A_{\lambda}(\xi,\eta))
 \end{align*}
 \end{proof}
 
 Consider the subgroup $G$ of $(\Cc^\ast)^n$ defined by,
 $$ G:= \{(\lambda_1,\ldots,\lambda_n)\in(\C^ \ast)^n\,:\, 
\vert \lambda_i\vert  = \vert \lambda_j\vert ,\; \forall \, i,j\in\alpha,\, \forall\,\alpha=1,\ldots, p\,\}$$ 
The Poisson action of $(\Cc^\ast)^n$ on $M$ restricts to a Poisson action of  $G$ on $M$. 
Clearly, this action is proper. 

By Theorem~\ref{thm:singular} the quotient space $M/G$ is a Poisson stratified space.  The quotient space $M/G$ can be identified by $\Gamma_{\nund}$.  The identification is obtained
via the map 
\[(x^1,\ldots,x^p):\Cc^{n_1}\backslash\{{\bf 0}\}\times\ldots\times\Cc^{n_p}\backslash\{{\bf 0}\}\to\Delta^{n_1-1}\times\ldots\times\Delta^{n_p-1},\]
where $x^\alpha:\Cc^{n_\alpha}\backslash\{{\bf 0}\}\to\Delta^{n_\alpha-1}$ defined by
\[x^\alpha(z_1^\alpha,\ldots,z_{n_\alpha}^\alpha)=({|z_1^\alpha|^2\over |z_1^\alpha|^2+\ldots+|z_{n_\alpha}^\alpha|^2},\ldots,{|z_{n_\alpha}^\alpha |^2\over |z_1^\alpha|^2+\ldots+|z_{n_\alpha}^\alpha |^2})\]

Above and in the sequal ${\bf 0}=(0,\ldots,0)$ denotes a zero vector, while $0$ stands for a zero scalar. The strata of $\Gamma_{\nund}=M/G$ i.e. the $(H)$-orbit type reduced  space with $H$ being a subgroup of $G$, see Theorem~\ref{thm:singular}, are identified with the faces of $\Gamma_{\nund}$.  Let $e$ be the identity element of $G$, then the $(\{e\})$-orbit type reduced space is $\Gamma_{\nund}^\circ$, the interior of $\Gamma_{\nund}$. 

For  any $\alpha=1,\ldots,p$, $i\in\alpha$  
\begin{align*}
\d x^\alpha_i=&-2x^\alpha_i\frac{1}{r^\alpha}({\bf 0},\ldots,\xi^\alpha,\ldots,{\bf 0},{\bf 0},\ldots,\eta^\alpha,\ldots,{\bf 0})^t\\
&+\frac{2}{r^\alpha}({\bf 0},\ldots,(0,\ldots,\xi^\alpha_i,\ldots,0),\ldots,{\bf 0},\ldots,(0,\ldots,\eta^\alpha_i,\ldots,0),\ldots,{\bf0})^t,
\end{align*}
where $r^\alpha=|z_1^\alpha|^2+\ldots+|z_{n_\alpha}^\alpha|^2$. Let $\beta=1,\ldots,p$ and $j\in\beta$ also,  by definition, see~\eqref{quotientPoisson}, we have
\begin{align*}
\{x^\alpha_i,x^\beta_j\}_{\Gamma_{\nund}^\circ}&={1\over4}(\d x_i^\alpha)^t\begin{pmatrix}D_\xi AD_\xi&D_\xi AD_\eta\\D_\eta AD_\xi&D_\eta AD_\eta\end{pmatrix}(\d x^\beta_j)\\
&=(W^\alpha)^t\,\begin{pmatrix}D_{\xi^\alpha} A^{\alpha,\beta}D_{\xi^\beta}&D_{\xi^\alpha} A^{\alpha,\beta}D_{\eta^\beta}\\D_{\eta^\alpha} A^{\alpha,\beta}D_{\xi^\beta}&D_{\eta^\alpha} A^{\alpha,\beta}D_{\eta^\beta}\end{pmatrix}\,W^\beta,
\end{align*}
where
$$ W^\alpha:=\frac{1}{r^\alpha} \left[-x^\alpha_i  \xi^\alpha + (0,\ldots,\xi^\alpha_i,\ldots,0),
-x^\alpha_i \eta^\alpha + (0\ldots,\eta^\alpha_i,\ldots,0) 
\right]$$
A strait forward calculations show:
\begin{align*}
\{x^\alpha_i,x^\beta_j\}_{\Gamma_{\nund}^\circ}=(\pi_A)^{\alpha,\beta}_{i,j},
\end{align*}
where $\pi_A$ is defined at \eqref{poissonpmg}. This shows that $\{x^\alpha_i,x^\beta_j\}_{\Gamma_{\nund}^\circ}$  is the same Poisson structure on $\Gamma_{\nund}^\circ$ that was considered in Section \eqref{pmg} . The same holds for all the faces of $\Gamma_{\nund}$, which justifies Remark~\ref{remark:poissonreduction}. 

Notice that in our case the condition \eqref{condition:poisson1} is an algebraic equality which holds on $\Gamma_{\nund}^\circ$. The equality \eqref{condition:poisson1} is invariant w.r.t. multiplication of $x_i,x_j,x_k$ with constant numbers. Hence our algebraic equality must hold  on the open subset $\mathbb{R}^n_+$, which in turn yields that it is satisfied all over $\mathbb{R}^n$, i.e., $\pi_A$ is actually a Poisson structure on $\mathbb{R}^n$.


\section{Examples}
\label{examples}
It is possible to fully classify the dynamics of $2{\rm D}$ and $3{\rm D}$ conservative polymatrix replicator systems,
but in this section we just briefly describe two examples of $3{\rm D}$ polymatrix replicators.

\subsection*{First Example}
Consider the signature $\nund=(2,2,2)$, take the skew sym\-metric matrix 
$$ A_0=\left[ \begin{array}{rrrrrr}
 0 & -1 & 0 & \frac{1}{2} & 0 & 1 \\
 1 & 0 & 0 & -\frac{1}{2} & -1 & \frac{1}{2} \\
 0 & 0 & 0 & 0 & \frac{1}{2} & -1 \\
 -\frac{1}{2} & \frac{1}{2} & 0 & 0 & 0 & 0 \\
 0 & 1 & -\frac{1}{2} & 0 & 0 & -\frac{1}{2} \\
 -1 & -\frac{1}{2} & 1 & 0 & \frac{1}{2} & 0
\end{array} \right] \;,$$
and the  point $p=\left(\frac{7}{4},\frac{3}{4},\frac{5}{4},1,1,1\right)$ such that
$A_0\,p= \left(\frac{3}{4},\frac{3}{4},-\frac{1}{2},-\frac{1}{2},-\frac{3}{8},-\frac{3}{8}\right)$.
Consider the matrix  $A=A_0\,D$, where $D=\diag\left(\frac{5}{2},\frac{5}{2},\frac{9}{4},\frac{9}{4},2,2\right)$.
This matrix is 
$$ A=\left[  
\begin{array}{rrrrrr}
 0 & -{5}/{2} & 0 &  {9}/{8} & 0 & 2 \\
  {5}/{2} & 0 & 0 & -{9}/{8} & -2 & 1 \\
 0 & 0 & 0 & 0 & 1 & -2 \\
 -{5}/{4} & {5}/{4} & 0 & 0 & 0 & 0 \\
 0 & {5}/{2} & -{9}/{8} & 0 & 0 & -1 \\
 -{5}/{2} & -{5}/{4} & {9}/{4} & 0 & 1 & 0
\end{array} \right] \;.$$
By remark~\ref{skew:pmg:model}  $((2,2,2),A)$ is a conservative  polymatrix game.
The phase space of the associated  replicator system is the cube
$$\Gamma_{(2,2,2)}= \Delta^1 \times\Delta^1 \times\Delta^1 \equiv [0,1]^3\;.$$
In the model $[0,1]^3$, the equilibrium point $q=D^{-1}p$ has coordinates
$q=\left(\frac{7}{10},\frac{5}{9},\frac{1}{2}\right)$, and hence is an interior point.
The line through $q$ with direction $v=\left(\frac{6}{5},-\frac{4}{9},-1\right)$
intersects the cube $[0,1]^3$ along the set $\Sigma$ of equilibria of this replicator system.
This set $\Sigma$ is a line segment joinning two points in the faces $\{x=1\}$ and $\{z=1\}$.
To compute the symplectic foliation of $]0,1[^3$ consider the matrix
$$ E=\left[\begin{array}{rrrrrr}
1 & -1 & 0 & 0 & 0 & 0 \\
0 & 0 & 1 & -1 & 0 & 0  \\
0 & 0 & 0 & 0 & 1 & -1  \\
\end{array} \right]$$
and define 
$B=-E\,A_0 E^ t$.
A simple calculation gives
$$B=\left[
\begin{array}{rrr}
 0 & 1 & -\frac{1}{2} \\
 -1 & 0 & -\frac{3}{2} \\
 \frac{1}{2} & \frac{3}{2} & 0
\end{array}
\right]\;.$$
\begin{figure}[h] 
\begin{center}
 \includegraphics[width=12.5cm]{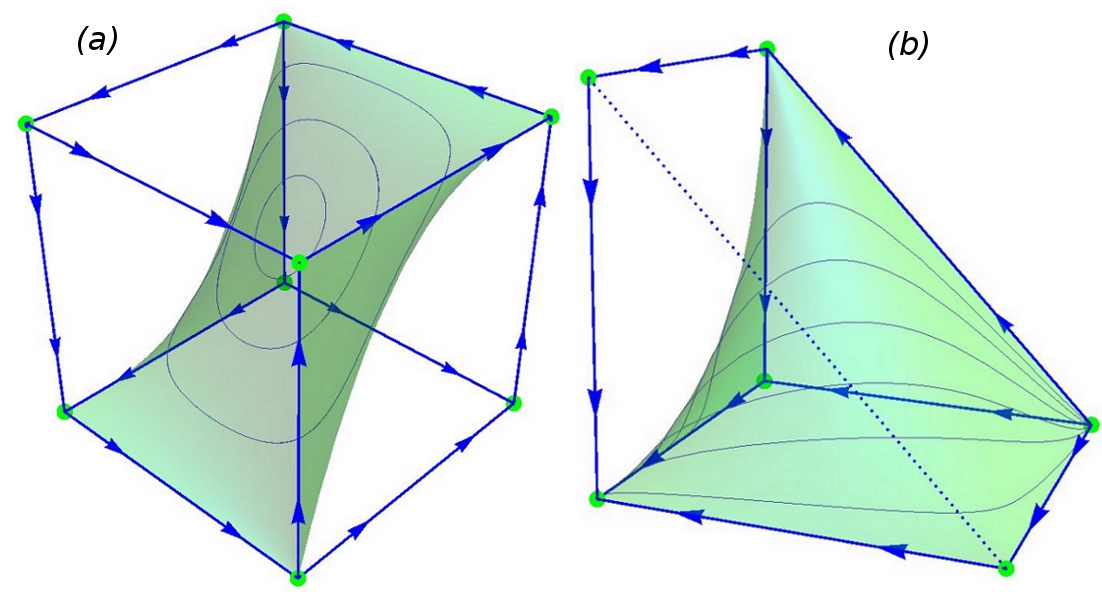}
\end{center}
\caption{Phase portraits of $3{\rm D}$ polymatrix replicators} \label{exs}
\end{figure}

The vector $w= \left(-\frac{3}{2},\frac{1}{2},1\right)$ is orthogonal to the space spanned by the columns of $B$.
The symplectic leaves of the constant Poisson structure on $\Rr^ 3$ defined by the skew symmetric matrix $B$ are
the planes orthogonal to $w$.
Thus, if we consider  the Poisson diffeomorphism
$\phi: \Rr^3 \to  ]0,1[^3$,
$$\phi(u_1,u_2,u_3)=\left( \frac{e^{u_1}}{1+e^{u_1}},
\frac{e^{u_2}}{1+e^{u_2}}, \frac{e^{u_3}}{1+e^{u_3}}  \right)\;,$$
the symplectic leaves on $ ]0,1[^3$ are  the $\phi$ images of these planes.
Inverting the map $\phi$, the symplectic leaves are given by the equations
\begin{align*}
&\left(\frac{x}{1-x}\right)^{-3/2}\left( \frac{y}{1-y} \right)^{1/2}\left( \frac{z}{1-z} \right)=e^c \\
& \Leftrightarrow\quad
(1-x)^{3/2} y^{1/2} z = e^c x^{3/2} (1-y)^{1/2} (1-z)\;,
\end{align*}
with $c\in\Rr$. Let $U_+$, resp. $U_-$, be the union of the faces
$\{x=1\}$, $\{y=0\}$, $\{z=0\}$, resp. $\{x=0\}$, $\{y=1\}$, $\{z=1\}$. On the interiors of these two open subsets of the cube's boundary the equation above is never satisfied. Therefore the closure of every symplectic leaf intersects the cube's boundary 
along the closed curve $C=\partial U_+ = \partial U_-\subset \partial [0,1]^3$. Because $\Sigma$ intersects both $U_-$ and $U_+$,
it follows that every symplectic leaf must intersect  $\Sigma$,
hence having a unique equilibrium. The orbits of our polymatrix replicator foliate each symplectic leaf into closed curves around that equilibrium point. We can also check that
$C$ is a heteroclinic cycle of the vector
field $X_{(2,2,2),A}$. See Figure~\ref{exs}(a).

\subsection*{Second Example} Consider the signature $\nund=(3,2)$, take the skew sym\-metric matrix 
$$ A_0=\left[ \begin{array}{rrrrr}
0 & 0 & \frac{1}{2} & \frac{1}{2} & -1 \\
 0 & 0 & -\frac{1}{2} & \frac{1}{2} & -\frac{1}{2} \\
 -\frac{1}{2} & \frac{1}{2} & 0 & 1 & \frac{1}{2} \\
 -\frac{1}{2} & -\frac{1}{2} & -1 & 0 & 0 \\
 1 & \frac{1}{2} & -\frac{1}{2} & 0 & 0 \\
\end{array} \right] \;,$$
and the  point $p=\left(\frac{9}{10},-\frac{8}{5},\frac{1}{2},0,1\right)$ such that
$A_0\,p= \left(-\frac{3}{4},-\frac{3}{4},-\frac{3}{4},-\frac{3}{20},-\frac{3}{20}\right)$.
Consider the matrix  $A=A_0\,D$, where $D=\diag\left(-\frac{1}{5},-\frac{1}{5},-\frac{1}{5},1,1\right)$.
This matrix is 
$$ A=\left[  
\begin{array}{rrrrr}
 0 & 0 & -\frac{1}{10} & \frac{1}{2} & -1 \\
 0 & 0 & \frac{1}{10} & \frac{1}{2} & -\frac{1}{2} \\
 \frac{1}{10} & -\frac{1}{10} & 0 & 1 & \frac{1}{2} \\
 \frac{1}{10} & \frac{1}{10} & \frac{1}{5} & 0 & 0 \\
 -\frac{1}{5} & -\frac{1}{10} & \frac{1}{10} & 0 & 0 \\
\end{array} \right] \;.$$
By remark~\ref{skew:pmg:model} $((3,2),A)$ is a conservative
polymatrix game.
The phase space of the associated replicator system is the prism
$$\Gamma_{(3,2)}= \Delta^2 \times\Delta^1  \equiv 
\{\,(x,y,z)\,:\, 0\leq x,y,z\leq 1,\; x+y\leq 1\,\} =:P \;.$$

In the model $P\subset\Rr^3$ the equilibrium point $q=D^{-1}p$ has coordinates
$q=\left(-\frac{9}{2},8,0\right)$, and hence is not interior to $P$.
The line of equilibria goes through $q$ with direction $v=\left(-\frac{5}{2},5,-1\right)$ and does not
intersect  the prism $P$ .
To compute the symplectic foliation of $P^\circ$ consider the matrix
$$ E=\left[\begin{array}{rrrrr}
1 & -1 & 0 & 0 & 0  \\
1 & 0 & -1 & 0 & 0  \\
0 & 0 & 0 & 1 & -1   \\
\end{array} \right]$$
and define 
$B=-E\,A_0 E^ t$.
A simple calculation gives
$$B=\left[
\begin{array}{rrr}
 0 & 1 & -\frac{1}{2} \\
 -1 & 0 & -1 \\
 \frac{1}{2} & 1 & 0 
\end{array}
\right]\;.$$
The vector $w= \left(-1,\frac{1}{2},1\right)$ is orthogonal to the space spanned by the columns of $B$.
The symplectic leaves of the constant Poisson structure on $\Rr^ 3$ defined by the skew symmetric matrix $B$ are
the planes orthogonal to $w$.
Thus, if we consider  the Poisson diffeomorphism
$\phi: \Rr^3 \to  P^\circ$,
$$\phi(u_1,u_2,u_3)=\left( \frac{e^{u_1}}{1+e^{u_1}+e^{u_2}},
\frac{e^{u_2}}{1+e^{u_1}+e^{u_2}}, \frac{e^{u_3}}{1+e^{u_3}}  \right)\;,$$
the symplectic leaves on $P^\circ$ are  the $\phi$ images of these planes.
Inverting the map $\phi$, the symplectic leaves are given by the equations
\begin{align*}
&\left(\frac{x}{1-x-y}\right)^{-1}\left( \frac{y}{1-x-y} \right)^{1/2}\left( \frac{z}{1-z} \right)=e^c \\
& \Leftrightarrow\quad
(1-x-y)^{1/2} y^{1/2} z = e^c x    (1-z)\;,
\end{align*}
with $c\in\Rr$. Let $U_+$, resp. $U_-$, be the union of the faces
$\{x+y=1\}$, $\{y=0\}$, $\{z=0\}$, resp. $\{x=0\}$, $\{z=1\}$. On the interiors of these two open subsets of the prism's boundary the equation above is never satisfied. Therefore the closure of every symplectic leaf intersects the prism's boundary 
along the closed curve $C=\partial U_+ = \partial U_-\subset \partial P$.  The points $r=(1,0,0)$ and $s=(0,0,1)$ on $C$
are respectively a global  repeller and  a global sink of 
the polymatrix replicator, and every symplectic leaf is foliated into orbits flowing from the repeller $r$ to the sink $s$. The closed curve 
$C$ is also the union of two heteroclinic chains from $r$ to $s$.
See Figure~\ref{exs}(b).
Note that this dynamical behaviour does not contradict the Hamiltonian character of the system because the area of each symplectic leaf is infinite.

\section*{Acknowledgements}
Both authors would like to thanks Rui Loja Fernandes for valuable suggestions and in particular for pointing them the example in  \cite{MR2555841}*{Section 2.5} .

The first author was supported by the Geometry and Mathematical Physics Project, 
FCT EXCL/MAT-GEO/0222/2012. The second author was supported by ``Funda\c{c}\~{a}o para a Ci\^{e}ncia e a Tecnologia'' through the Program POCI 2010 and the Project ``Randomness in Deterministic Dynamical Systems and Applications'' (PTDC-MAT-105448-2008).
\begin{bibdiv}
\begin{biblist}

\bib{MR765812}{article}{
   author={Akin, Ethan},
   author={Losert, Viktor},
   title={Evolutionary dynamics of zero-sum games},
   journal={J. Math. Biol.},
   volume={20},
   date={1984},
   number={3},
   pages={231--258},
   issn={0303-6812},
   review={\MR{765812 (86g:92024a)}},
   doi={10.1007/BF00275987},
}

\bib{MR1643678}{article}{
   author={Duarte, Pedro},
   author={Fernandes, Rui L.},
   author={Oliva, Waldyr M.},
   title={Dynamics of the attractor in the Lotka-Volterra equations},
   journal={J. Differential Equations},
   volume={149},
   date={1998},
   number={1},
   pages={143--189},
   issn={0022-0396},
   review={\MR{1643678 (99h:34075)}},
   doi={10.1006/jdeq.1998.3443},
}

\bib{MR2178041}{book}{
   author={Dufour, Jean-Paul},
   author={Zung, Nguyen Tien},
   title={Poisson structures and their normal forms},
   series={Progress in Mathematics},
   volume={242},
   publisher={Birkh\"auser Verlag, Basel},
   date={2005},
   pages={xvi+321},
   isbn={978-3-7643-7334-4},
   isbn={3-7643-7334-2},
}

\bib{MR1738431}{book}{
   author={Duistermaat, J. J.},
   author={Kolk, J. A. C.},
   title={Lie groups},
   series={Universitext},
   publisher={Springer-Verlag, Berlin},
   date={2000},
   pages={viii+344},
   isbn={3-540-15293-8},
   review={\MR{1738431 (2001j:22008)}},
   doi={10.1007/978-3-642-56936-4},
}

\bib{MR723584}{article}{
   author={Eshel, I.},
   author={Akin, E.},
   title={Coevolutionary instability and mixed Nash solutions},
   journal={J. Math. Biol.},
   volume={18},
   date={1983},
   number={2},
   pages={123--133},
   issn={0303-6812},
   review={\MR{723584 (85d:92023)}},
   doi={10.1007/BF00280661},
}

\bib{MR2555841}{article}{
   author={Fernandes, Rui Loja},
   author={Ortega, Juan-Pablo},
   author={Ratiu, Tudor S.},
   title={The momentum map in Poisson geometry},
   journal={Amer. J. Math.},
   volume={131},
   date={2009},
   number={5},
   pages={1261--1310},
   issn={0002-9327},
   review={\MR{2555841 (2011f:53199)}},
   doi={10.1353/ajm.0.0068},
}

\bib{MR1393843}{article}{
   author={Hofbauer, Josef},
   title={Evolutionary dynamics for bimatrix games: a Hamiltonian system?},
   journal={J. Math. Biol.},
   volume={34},
   date={1996},
   number={5-6},
   pages={675--688},
   issn={0303-6812},
   review={\MR{1393843 (97h:92011)}},
   doi={10.1007/s002850050025},
}
	
\bib{MR1635735}{book}{
   author={Hofbauer, Josef},
   author={Sigmund, Karl},
   title={Evolutionary games and population dynamics},
   publisher={Cambridge University Press},
   place={Cambridge},
   date={1998},
   pages={xxviii+323},
   isbn={0-521-62365-0},
   isbn={0-521-62570-X},
   review={\MR{1635735 (99h:92027)}},
}

\bib{MR633014}{article}{
   author={Hofbauer, Josef},
   title={On the occurrence of limit cycles in the Volterra-Lotka equation},
   journal={Nonlinear Anal.},
   volume={5},
   date={1981},
   number={9},
   pages={1003--1007},
   issn={0362-546X},
   review={\MR{633014 (83c:92063)}},
   doi={10.1016/0362-546X(81)90059-6},
}
\bib{MR0392000}{article}{
   author={Howson, Joseph T., Jr.},
   title={Equilibria of polymatrix games},
   journal={Management Sci.},
   volume={18},
   date={1971/72},
   pages={312--318},
   issn={0025-1909},
   review={\MR{0392000 (52 \#12818)}},
}

\bib{MR1048350}{book}{
   author={Perelomov, A. M.},
   title={Integrable systems of classical mechanics and Lie algebras. Vol.
   I},
   note={Translated from the Russian by A. G. Reyman [A. G. Re\u\i man]},
   publisher={Birkh\"auser Verlag, Basel},
   date={1990},
   pages={x+307},
   isbn={3-7643-2336-1},
   review={\MR{1048350 (91g:58127)}},
   doi={10.1007/978-3-0348-9257-5},
}

\bib{MR1869601}{book}{
   author={Pflaum, Markus J.},
   title={Analytic and geometric study of stratified spaces},
   series={Lecture Notes in Mathematics},
   volume={1768},
   publisher={Springer-Verlag, Berlin},
   date={2001},
   pages={viii+230},
   isbn={3-540-42626-4},
   review={\MR{1869601 (2002m:58007)}},
}


\bib{MR1024957}{article}{
   author={Quintas, L. G.},
   title={A note on polymatrix games},
   journal={Internat. J. Game Theory},
   volume={18},
   date={1989},
   number={3},
   pages={261--272},
   issn={0020-7276},
   review={\MR{1024957 (91a:90188)}},
   doi={10.1007/BF01254291},
}

\bib{MR800991}{article}{
   author={Redheffer, Ray},
   title={Volterra multipliers. I, II},
   journal={SIAM J. Algebraic Discrete Methods},
   volume={6},
   date={1985},
   number={4},
   pages={592--611, 612--623},
   issn={0196-5212},
   review={\MR{800991 (87j:15037a)}},
   doi={10.1137/0606059},
}

\bib{MR1027969}{article}{
   author={Redheffer, Ray},
   title={A new class of Volterra differential equations for which the
   solutions are globally asymptotically stable},
   journal={J. Differential Equations},
   volume={82},
   date={1989},
   number={2},
   pages={251--268},
   issn={0022-0396},
   review={\MR{1027969 (91f:34058)}},
   doi={10.1016/0022-0396(89)90133-2},
}

\bib{MR741270}{article}{
   author={Redheffer, Ray},
   author={Walter, Wolfgang},
   title={Solution of the stability problem for a class of generalized
   Volterra prey-predator systems},
   journal={J. Differential Equations},
   volume={52},
   date={1984},
   number={2},
   pages={245--263},
   issn={0022-0396},
   review={\MR{741270 (85k:92068)}},
   doi={10.1016/0022-0396(84)90179-7},
}

\bib{MR646217}{article}{
   author={Redheffer, Ray},
   author={Zhou, Zhi Ming},
   title={Global asymptotic stability for a class of many-variable Volterra
   prey-predator systems},
   journal={Nonlinear Anal.},
   volume={5},
   date={1981},
   number={12},
   pages={1309--1329},
   issn={0362-546X},
   review={\MR{646217 (83h:92074)}},
   doi={10.1016/0362-546X(81)90108-5},
}

\bib{MR644963}{article}{
   author={Redheffer, Ray},
   author={Zhou, Zhi Ming},
   title={A class of matrices connected with Volterra prey-predator
   equations},
   journal={SIAM J. Algebraic Discrete Methods},
   volume={3},
   date={1982},
   number={1},
   pages={122--134},
   issn={0196-5212},
   review={\MR{644963 (83m:15020)}},
   doi={10.1137/0603012},
}

\bib{MR1189803}{book}{
   author={Volterra, Vito},
   title={Le\c cons sur la th\'eorie math\'ematique de la lutte pour la vie},
   language={French},
   series={Les Grands Classiques Gauthier-Villars. [Gauthier-Villars Great
   Classics]},
   note={Reprint of the 1931 original},
   publisher={\'Editions Jacques Gabay, Sceaux},
   date={1990},
   pages={vi+215},
   isbn={2-87647-066-7},
   review={\MR{1189803 (93k:92011)}},
}

\bib{smith82}{book}{
  author = {Smith, John Maynard},
  title = {Evolution and the Theory of Games},
  publisher = {Cambridge University Press},
  date ={1982}
}


\end{biblist}
\end{bibdiv}

\end{document}